\theoremstyle{plain}
\newtheorem{theorem}{Theorem}    
\newtheorem{corollary}{Corollary}[theorem]
\xpatchcmd{\proof}{\@addpunct{.}}{\@addpunct{:}}{}{}
\pgfplotsset{compat=1.18}
\begin{document}

\title{Optical Intelligent Reflecting Surfaces Empowering Non-Terrestrial Communications }

\author{Shunyuan Shang,~Emna~Zedini,~\IEEEmembership{Member,~IEEE,}~Abla Kammoun,~\IEEEmembership{Member,~IEEE,}~and~Mohamed-Slim~Alouini,~\IEEEmembership{Fellow,~IEEE} 
\thanks{(\textit{Corresponding author: Shunyuan Shang})}
\thanks{S. Shang, A. Kammoun and M.-S. Alouini are with the Computer, Electrical, and Mathematical Science and Engineering  Division, King Abdullah University of Science and Technology, Thuwal, Makkah Province, Saudi Arabia (e-mail: shunyuan.shang@kaust.edu.sa;abla.kammoun@kaust.edu.sa; slim.alouini@kaust.edu.sa).}
    \thanks{E. Zedini is with the College of Innovation and Technology, University of Michigan-Flint, Flint, MI, USA (e-mail: ezedini@umich.edu).}}


\maketitle

\begin{abstract}
In this work, we propose an innovative system that combines high-altitude platforms (HAPs) and optical intelligent reflecting surfaces (OIRS) to address line-of-sight (LOS) challenges in urban environments. Our three-hops system setup includes an optical ground station (OGS), a HAP, an OIRS, and a user. Signals are transmitted from the OGS to the HAP via a free space optical (FSO) link, with the HAP functioning as an amplify-and-forward (AF) relay that redirects signals through an OIRS, effectively bypassing obstacles such as buildings and trees to improve connectivity for non-line-of-sight (NLOS) User. For the OIRS link, we address key channel impairments, including atmospheric turbulence, pointing errors, attenuation, and geometric and misalignment losses (GML). An accurate approximation for the Hoyt-distributed GML model is derived, enabling us to obtain closed-form expressions for outage probability (OP) and various performance metrics, such as average bit error rate (BER) and channel capacity of the OIRS-assisted FSO link. Furthermore, we analyze the end-to-end signal-to-noise ratio (SNR) and derive closed-form expressions for OP and performance metrics. Asymptotic expressions are provided for high-SNR regimes, allowing the system's diversity order to be calculated.

\end{abstract}

\begin{IEEEkeywords}
Non-terrestrial networks (NTNs), high-altitude platforms (HAP), optical intelligent reflecting surfaces (OIRS), free-space optical (FSO) links, and Gamma-Gamma.
\end{IEEEkeywords}

\IEEEpeerreviewmaketitle

\section{Introduction}
As the demand for ultra-fast data connectivity continues to grow with the proliferation of digital technologies and connected devices \cite{dang2020should,qadir2023towards}, FSO communication is gaining prominence as a critical enabler in the evolution of communication technologies beyond 5G towards 6G, particularly in point-to-point networks. Its advantages over traditional radio frequency (RF) communication include significantly broader bandwidth, higher channel capacity, and cost-efficiency due to operation in an unlicensed spectrum \cite{sood2018analysis,al2020survey}. Additionally, FSO communication offers enhanced immunity to interference and robust security. FSO transceivers are also more affordable and more accessible to deploy compared to their RF counterparts \cite{khalighi2014survey}. However, FSO systems rely on a LOS connection between the transmitter and the receiver and face challenges such as fading due to atmospheric turbulence, significant atmospheric losses during dense fog and heavy snowfall. Although numerous techniques have been suggested to mitigate these impairments, such as aperture averaging \cite{khalighi2009fading}, diversity methods\cite{priyadarshani2023earth}, and adaptive optics \cite{ata2022haps}, the need for a clear LOS link continues to limit the applicability of FSO systems significantly. 

Recent research has extensively explored the integration of FSO systems with non-terrestrial networks (NTNs) to mitigate the limitations imposed by the LOS requirements of FSO communication \cite{turk2024design,singya2022mixed,samy2024enabling,elamassie2024multi,shah2021adaptive,priyadarshani2023earth,yahia2022haps,deng2025distributedcoordinationheterogeneousnonterrestrial}.  High-altitude platforms (HAPs) are particularly advantageous for enhancing FSO communication among the various non-terrestrial solutions. In \cite{turk2024design, singya2022mixed,samy2024enabling}, the authors leverage multiple HAPs to enable optical communication links, successfully achieving data transmission from earth stations to ground User and from satellites to ground User. 

Nevertheless, when User is located in urban areas, communication between HAPs and User can still be obstructed by tall buildings and trees. To address this issue, the authors in \cite{OIRS1} propose utilizing OIRS to mitigate LOS requirements for FSO communication systems. OIRS are energy- and cost-efficient since they consist of passive elements and can be deployed on existing infrastructure, such as buildings, effectively addressing the LOS dependency in FSO communication. The authors develop statistical channel models for both two-dimensional (2D) and three-dimensional (3D) systems.  
{
The key difference between the 2D and 3D system models lies in their treatment of spatial beam deviation and incident–reflection geometry. In the 2D model, the incident and reflected optical beams are assumed to lie within the same plane, and the geometric misalignment loss (GML) is evaluated based solely on beam jitters along two orthogonal in-plane axes. In contrast, the 3D model accommodates arbitrary orientations of the incident and reflected beams, allowing the reflection plane to differ from the incidence plane. Consequently, the 3D GML formulation captures the combined spatial jitters of the laser source, OIRS, and lens along all three spatial dimensions, offering a more comprehensive characterization of misalignment effects in practical scenarios. It is also worth noting that the 3D model generalizes the 2D case: by selecting specific system parameters, the 3D GML expression naturally reduces to the 2D form as a special case.
}

There is now extensive research on OIRS \cite{pang2022optical,nguyen2022design,naik2022evaluation,ndjiongue2021analysis,malik2022performance}. The authors in \cite{pang2022optical}  established an end-to-end (e2e) three hops model for a hybrid FSO/RF channel, where a more general 3D model is employed for the OIRS channel, incorporating a decode-and-forward (DF) relay. The authors in \cite{nguyen2022design}  developed an e2e dual-hop model for an FSO channel, utilizing a 2D model for the OIRS channel with an AF relay. In \cite{naik2022evaluation,ndjiongue2021analysis,malik2022performance}, the authors analyzed the application of FSO models incorporating OIRS in building-to-building (B2B) communications and underwater communication scenarios. {
Both amplify-and-forward (AF) and decode-and-forward (DF) relaying schemes are widely used in FSO systems. In this work, we adopt a fixed-gain AF relay at the HAP. The rationale behind this choice is summarized as follows:
\begin{itemize}
\item {Advantages of AF over DF:}
    AF relaying avoids the need for full signal decoding and re-encoding, which significantly reduces processing complexity. It is better suited for power- and resource-constrained relay platforms, such as UAVs or HAPs, where hardware simplicity and low latency are crucial. 
\item {Disadvantages of AF:}
AF relays amplify not only the desired signal but also the accumulated noise, potentially degrading SNR at the destination. Also, the end-to-end SNR becomes a nonlinear function of the individual hop SNRs, which complicates statistical analysis and closed-form performance evaluation.
\end{itemize}
Despite the analytical complexity introduced by AF relaying, it offers a practical and lightweight solution for non-terrestrial FSO deployments. 
}
 \begin{table*}[b]
\centering
{\caption{{ Comparison of related works involving OIRS-Assisted FSO Systems}}
\label{tab:OIRSComparison}
\begin{tabular}{|c|c|c|c|c|c|c|}
\hline
{Reference} & {System Type} & {Hops} & {OIRS Model} & {Relay Type} & Scenario \\ 
\hline
\cite{OIRS1} & Single FSO link & 2 & 2D/3D  & None  & None \\ \hline \cite{pang2022optical} & Hybrid FSO/RF & 3 & 3D & DF  &B2B \\
\hline
\cite{nguyen2022design} & Hybrid FSO/RF & 3 & 2D & AF &NTN \\
\hline
\cite{naik2022evaluation} & Single FSO link & 2 & None  & None   &Underwater\\
\hline
\cite{ndjiongue2021analysis} & Single FSO link & 2 & 3D & None &NTN\\
\hline
\cite{malik2022performance} & Single FSO link & 2 & 3D  & None  &NTN\\
\hline
\cite{ndjiongue2021analysis} & Single FSO link & 2 & 3D & None  &NTN\\
\hline
\cite{shang2025novelhybridopticalstar} & Dual FSO-RF link & 3 & 3D & AF  &NTN\\
\hline
Our work & Dual FSO-FSO links & 3 & 3D  & AF  &NTN \\
\hline
\end{tabular}}
\end{table*}

{ To the best of the authors' knowledge, this is the first study to incorporate a  3D OIRS model and fixed gain AF relay into a three-hop system. Table~\ref{tab:OIRSComparison} provides a comparative summary of representative existing works on OIRS-assisted FSO systems and highlights the key distinctions of our proposed model.}  As shown in Fig.~\ref{fig1}, the proposed system consists of four interconnected nodes: the optical ground station (OGS), HAP, OIRS, and the User. { Compared to terrestrial OIRS-assisted FSO systems, the HAP offers notable advantages. The HAP acts as an aerial relay with a quasi-omnidirectional field of view, ensuring a reliable LOS link from the OGS regardless of urban obstructions. Additionally, the HAP’s altitude and mobility enable adaptive deployment, allowing coverage extension or real-time reconfiguration in response to changing network demands or environmental conditions.}

{ Nevertheless, despite the elevated altitude and extensive coverage capabilities offered by the HAP, the direct FSO link between the HAP and the User may still experience significant obstructions in dense urban environments. Such obstructions typically include high-rise buildings, dense vegetation, large billboards, and other urban infrastructure, all of which can severely attenuate or completely block optical signals. As illustrated by the dashed blue line in Fig.~\ref{fig1}, these obstacles substantially degrade the reliability of the communication link. So, relying solely on HAPs to maintain FSO coverage for all urban users would require deploying multiple HAPs within a small area, which significantly increases energy consumption and operational cost. In contrast, equipping building rooftops with OIRS elements offers a low-power, scalable, and infrastructure-compatible alternative that efficiently overcomes non-line-of-sight (NLOS) blockages. Placing the OIRS on rooftops also enables the use of rooftop solar panels for power supply, which can further reduce the system cost compared to deploying the OIRS on HAPs. }
\begin{figure}[!ht]
\centering\includegraphics[width=0.35\textwidth]{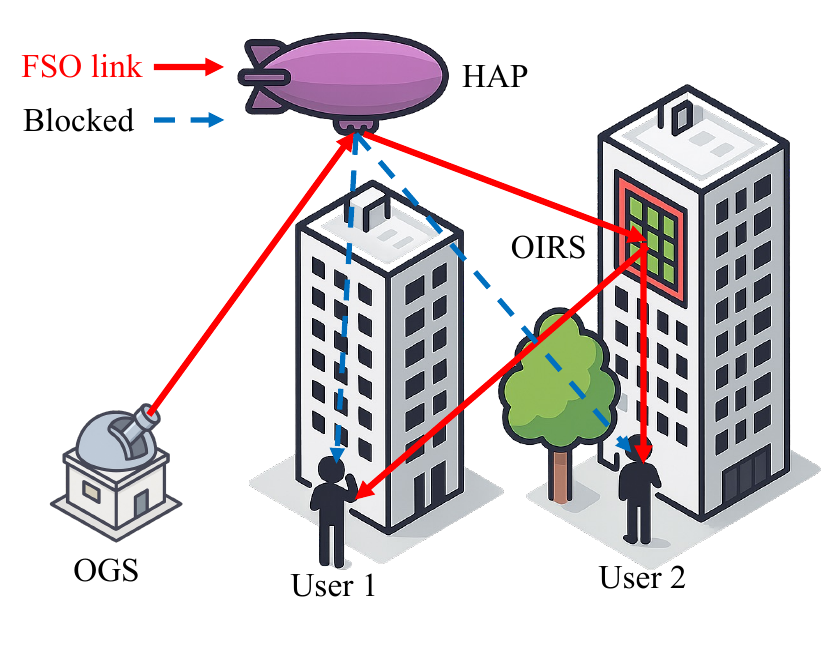}
    \caption{HAP-Ground Integrated Communication System with Optical Intelligent Reflecting Surfaces.}
    \label{fig1}
\end{figure}

{ 
Despite the considerable potential of the proposed system, deriving a closed-form expression for the e2e SNR CDF poses significant analytical challenges due to the following reasons:
\begin{itemize}
\item{Lack of Closed-Form Composite PDF under GG Turbulence and 3D GML:} In \cite{OIRS1}, there is no available closed-form expression for the composite channel's PDF when both GG atmospheric turbulence and 3D GML are jointly considered.

\item{Meijer-$G$ Functions in the First-Hop SNR Distribution:} The statistical characterization of the first FSO hop, namely from the OGS to the HAP, inherently involves the Meijer-$G$ function in the expression of the SNR’s PDF due to the GG turbulence and pointing error effects. While the Meijer-$G$ function provides a powerful tool for representing complex integrals, it also complicates the derivation of closed-form expressions, especially when combined with other non-elementary functions.

\item The use of a fixed-gain AF relay further exacerbates the difficulty, as the e2e SNR expression becomes a nonlinear function of the individual link SNRs. In contrast to DF relaying, where the end-to-end CDF can often be obtained by simply applying a product transform of the individual link CDFs, AF relaying requires computing an integral over the joint distribution of the two hops.
\end{itemize}

Due to the above challenges, this work proposes a novel approximation framework to simplify the composite channel model and obtain accurate closed-form and asymptotic results using advanced special functions, including the bivariate Fox-H function.} 
{ The main contributions of this work are as follows:
\begin{itemize}
\item For the 3D OIRS model incorporating GML following a Hoyt distribution and atmospheric turbulence modeled by the G-G distribution as proposed in \cite{OIRS1}, the authors did not provide a closed-form expression for the composite channel. In this work, we address this limitation by deriving an accurate and analytically tractable approximate expression, which serves as a foundation for further performance analysis.

\item Based on our derived approximate expression for the 3D OIRS model incorporating geometric misalignment loss (GML) following a Hoyt distribution and atmospheric turbulence modeled by the Gamma-Gamma (GG) distribution, we obtain closed-form expressions for the probability density function (PDF) and cumulative distribution function (CDF) of the SNR for the HAP–OIRS–User link. Leveraging the derived CDF, we further obtain closed-form expressions for key performance metrics of this link, including OP, average bit error rate (BER), channel capacity, and the statistical moments of the SNR.

\item Based on our derived approximate expression for the 3D OIRS model incorporating GML following a Hoyt distribution and atmospheric turbulence modeled by the G-G distribution, we successfully obtain closed-form expressions for the PDF and CDF  of the SNR for the complete end-to-end link, spanning from the OGS to the HAP, then to the OIRS, and finally to the User. And then  utilizing the closed-form expression for the CDF of the SNR for the end-to-end link from the HAP to the User, we derive analytical expressions for several key performance metrics. These include the OP, average BER for On-Off Keying (OOK), M-ary Quadrature Amplitude Modulation (M-QAM), and M-ary Phase Shift Keying (M-PSK), as well as the channel capacity and statistical moments of SNR for the end-to-end link.

\item Considering the complexity of the bivariate Fox-H function, we also provide asymptotic results for OP and average BER of the end to end link from to OGS to the User. From the asymptotic result of the OP, we derive the system's diversity order.
\end{itemize}}

The remainder of this work is structured as follows: Section II presents the system and channel models. In Section III, we perform a statistical analysis of the SNR for the OIRS link, deriving analytical expressions for various performance metrics. We also analyze the end-to-end SNR, providing analytical expressions and asymptotic results in the high-SNR regime. Section IV presents numerical and simulation results, and concluding remarks are provided in Section V.

\section{ System models for two single links }
To clarify the positional relationships, Fig.~\ref{fig2} illustrates the relative 3D positions of the OGS, HAP, OIRS, and User.  The OGS is located at $(0, 0, H_O)$, and it transmits signals to the HAP through a FSO link with a zenith angle of $\zeta_1$. The HAP is positioned at $(X_H, Y_H, H_H)$, and the distance between the OGS and the HAP is denoted by $d_{OH}$. As an AF relay, the HAP amplifies the signal received from the OGS and forwards it to the User or via the IRS.  The distance between the HAP and the IRS is $d_{HI}$, and the corresponding zenith angle is $\zeta_2$. At the IRS, the signal is reflected and then transmitted to the User located at $(X_U, Y_U, H_U)$. The distance between the IRS and the User are $d_{IU}$, with a zenith angle of $\zeta_3$. As shown in Fig.~\ref{fig3}, a new coordinate system is established.   The incident angle \( \theta_i \) is the angle between the incoming beam and the surface normal, while the reflection angle \( \theta_r \) is the angle between the reflected beam and the normal. Unlike conventional reflection, the IRS intelligently adjusts \( \theta_r \) to optimize signal direction. The angle \( \theta_{rl} \) is the angle between the reflected beam and the receiving lens, while the azimuthal angle \( \phi_r \) defines the reflected beam's direction in the horizontal \( xy \)-plane. {
While this work focuses on a single-User scenario to enable a tractable analytical framework, the proposed system architecture is inherently scalable to multi-user deployments. In practical urban environments, multiple OIRS units can be installed on the rooftops of different buildings to serve users located in distinct NLOS regions. }

\begin{figure}[!ht]
\centering\includegraphics[width=0.4\textwidth]{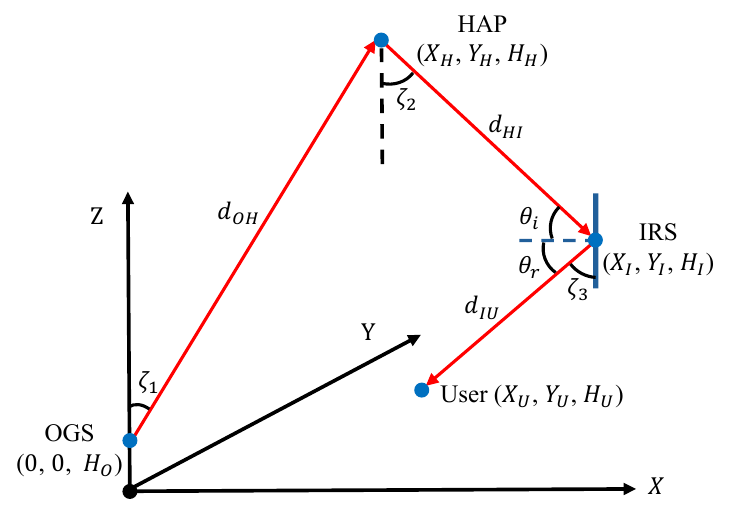}
    \caption{Precise Positioning of the Integrated HAP-Ground Communication System with OIRS.}
    \label{fig2}
\end{figure}
\begin{figure}[!ht]
\centering\includegraphics[width=0.35\textwidth]{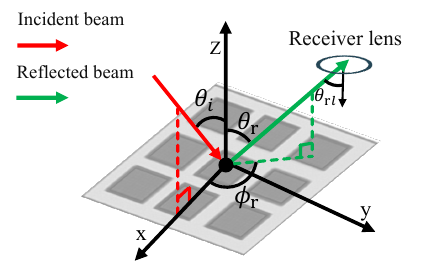}
    \caption{Schematic diagram of the OIRS System.}
    \label{fig3}
\end{figure}

\subsection{System model from OGS to HAP}
  
\subsubsection{Channel Model}
{
To accurately model the FSO communication channel gain $h_1$ from the OGS to the HAP, it is essential to consider the combined effects of attenuation losses \( h_{p1} \), atmospheric turbulence \( h_{a1} \) and pointing errors \( h_{g1} \). The channel model \( h_1 \) can be given as
\begin{align}
    h_1=h_{p1}h_{a1}h_{g1}.
\end{align}

Atmospheric attenuation resulting from absorption and scattering effects is given by the Beer-Lambert law, as detailed in \cite[Eq.~(1)]{hal1}. This law describes how the intensity of light decreases exponentially with the distance it travels through the atmosphere, affecting the overall signal strength, which can be expressed as
\begin{align}
    h_{p1}=\exp\left[-C_h (\lambda)d_{OH}\right],
\end{align}
where $\lambda$ denotes the wavelength in nanometers [nm]. $C_h (\lambda)$ represents the attenuation coefﬁcient which is specified in \cite[Eq.~(4)]{hal1} as
\begin{align}
    C_h(\lambda) = \frac{3.912}{V}\left(\frac{\lambda}{550}\right)^{-q_V},
\end{align}
where  $V$ represents the visibility in kilometers [$\mathrm{Km}$], and $q_V$ is the corresponding coefficient.

Since the distance between the OGS and the HAP is tens of kilometers in this work, we choose to model the atmospheric turbulence from OGS to HAP using the G-G distribution, which is provided as \cite[Eq.(56), pp.462]{Laserbeam}
\begin{align}
\label{pdfhat}
    f_{h_{a 1}}(h_{a 1})=\frac{2(\alpha_1 \beta_1)^{\frac{\alpha_1+\beta_1}{2}}h_{a t}^{\frac{\alpha_1+\beta_1}{2}-1}}{\Gamma(\alpha_1) \Gamma(\beta_1)}  K_{\alpha_1-\beta_1}\!\left(2 \sqrt{\alpha_1 \beta_1 h_{a 1}}\right)\!,
\end{align}
where $K_a (\cdot)$ indicates the modified Bessel function of the second kind with the order $a$, and $\Gamma (\cdot)$ represents the Gamma function,
$\alpha_1=1/[\exp(\sigma_{\ln X1}^2)-1]$, and $\beta_1=1/[\exp(\sigma_{\ln Y1}^2 )-1]$. The large-scale log variance $\sigma_{\ln X1}^2$ and the small-scale log variance $\sigma_{\ln Y1}^2$ are provided by \cite[Eqs. (97) and (101), pp.352]{Laserbeam} as
\begin{align}
    \sigma_{\ln X1}^2=\frac{0.49\sigma_{B1}^2}{[1+0.56(1+\Theta) \sigma_{B1}^{12/5} ]^{7/6}},
\end{align}
\begin{align}
    \sigma_{\ln Y1}^2=\frac{0.51\sigma_{B1}^2}{[1+0.69\sigma_{B1}^{12/5} ]^{5/6}},
\end{align} 
where $\sigma_{B1}^2$ represents the Rytov variance, quantifying the scintillation index in weak turbulence scenarios. For the uplink propagation case, $\sigma_{B1}^2$ is expressed as \cite[Eq.~(12)]{ata2022haps}
\begin{equation}
\begin{aligned}
    &\sigma_{B1}^{2} =  \,\,8.7 {k_w}^{7 / 6}\left(H_H-H_O\right)^{5 / 6} \sec ^{11 / 6}(\zeta_1)\operatorname{Re}\Big(\int_{H_O}^{H_H} \\ &\times C_{n}^{2}(l)\Big\{\left[\Lambda {\xi_1}^{2}+\mathrm{i} \xi_1(1-\overline{\Theta} \xi_1)\right]^{5 / 6}-\Lambda^{5 / 6} {\xi_1}^{5 / 3}\Big\} \mathrm{d} l\Big),
\end{aligned}
\end{equation}
where \( k_w = 2\pi/\lambda \) represents the wave number, with \( \lambda \) being the wavelength measured in meters [$\mathrm{m}$]. The function \( \sec(\cdot) \) denotes the secant function. The Fresnel ratio of the Gaussian beam at the receiver is expressed as \( \Lambda = \Lambda_0/(\Lambda_0^2 + \Theta_0^2) \), where \( \Lambda_0 = 2d_{OH}/(k_w \omega_{01}^2) \) and \( \omega_{01} \) is the initial beam radius from OGS. The beam curvature parameter at the transmitter is given by \( \Theta_0 = 1 - d_{OH}/F_0 \). For the uplink propagation case, the normalized distance parameter is \( \xi_1 = (l - H_H)/(H_O - H_H) \). The complementary parameter is defined as \( \overline{\Theta} = 1 - \Theta \), with the beam curvature parameter at the receiver being \( \Theta = \Theta_0/(\Theta_0^2 + \Lambda_0^2) \). The turbulence structure constant \( C_n^2(l) \) follows the conventionally used Hufnagel-Valley (HV) model, which is specified by \cite[Eq.~(1), pp.481]{Laserbeam}
\begin{equation}
\begin{aligned}
         C_{n}^{2}(l)&\,\,=0.00594(\omega / 27)^{2}\left(10^{-5} l\right)^{10}\exp (-l / 1000)\\ 
 &+2.7 \times 10^{-16} \exp (-l / 1500)+A \exp (-l / 1000),
\end{aligned}
\end{equation}
where \( l \) is measured in meters [$\mathrm{m}$], \( \omega \) denotes the root mean square (RMS) wind speed in meters per second [$\mathrm{m/s}$], and \( A \) represents the nominal value of \( C_{n}^{2}(0) \), as detailed in \cite[pp. 481]{Laserbeam}.

Pointing errors \( h_{g1} \) are a critical impairment in FSO systems, especially over long-distance. These errors occur due to various dynamic and mechanical disturbances that cause deviations in the beam's intended trajectory. In our system, potential sources of pointing error include HAP platform instability, which arises from residual vibrations and orientation jitter caused by wind gusts, atmospheric turbulence, and limitations in onboard stabilization mechanisms. Additionally, mechanical inaccuracies at the OGS, such as tracking errors, thermal deformations, and structural vibrations in optical mounts, can introduce misalignments.  The PDF of \( h_{g1} \) is given as \cite[Eq.~(11)]{farid2007outage}
\begin{align}
\label{pdfhpl}
    f_{g1}(h_{g1}) = \frac{\eta_s^2}{A_0} \left( \frac{h_{p1}}{A_0} \right)^{\eta_s^2  - 1},
\end{align}
where $\eta_{s}$ is defined as $\omega_{b}\sqrt{\sqrt{\pi A_{01}}/\left[2 v_e\exp \left(-v_e^{2}\right)\right]} /\left(2 \sigma_{S0}\right)$, $v_e=r_{a}\sqrt{\pi / 2}  / \omega_{b}$ with $\omega_b$ representing the beam waist, $A_{01}=\text{erf}^2(v_e)$, and ${\rm erf}(\cdot)$ denoting the error function. \( r_a \) represents the radius of the receiver aperture.  

\subsubsection{ SNR Statistics from OGS to HAP}
Let \( s_1(t) \) be the signal transmitted by the OGS. The received signal by HAP can be written as
\begin{align}
    y_{H1}(t)=\sqrt{\eta_1 P_{01} }h_1 s_1(t) + n_{H}(t),
\end{align}
where \( P_{01} \) is the transmit power from the HAP, and \( \eta_1 \) is the optical-to-electrical conversion factor. The noise term \( n_{H}(t) \sim \mathcal{N}\left(0, \sigma_{H}^{2}\right) \) represents Gaussian noise. Therefore, SNR of the FSO link \( \gamma_{H1} \) can be formulated considering both IM/DD and heterodyne detections as
\begin{align}
\label{gammaHoverline}
    \gamma_{H1}=\overline{\gamma}_{H1} h^{r_1},
\end{align}
where  \( \overline{\gamma}_{H1} \) is defined as  \( \overline{\gamma}_{H1} = \frac{\left(\eta_1 P_{01}\right)^{r_1/2}}{\sigma_{H}^{2}} \), where  \( r_1 = 1 \) corresponds to the heterodyne detection technique, and \( r_1 = 2 \) corresponds to IM/DD.} The  CDF of $\gamma_{H1}$ can be expressed as \cite[Eq.~(17)]{shang_enhancing}
\begin{align}
\label{CDFH}
\nonumber F_{\gamma_{H1}}&\left(\gamma_{H1}\right) =1-\frac{\eta_{s}^{2} }{\Gamma(\alpha_1) \Gamma(\beta_1)}\\ 
&   \times{\rm G}_{2,4}^{4,0}\left[\frac{\alpha_1 \beta_1}{A_{0} h_{p1} }\left(\frac{ \gamma_{H1}}{\overline{\gamma}_{H1}}\right)^{\frac{1}{r_1}} \Bigg| \begin{array}{c}
1+\eta_{s}^{2} , 1 \\
0, \eta_{s}^{2} , \alpha_1, \beta_1
\end{array}\right] . 
\end{align}

\subsection{System model from HAP to User}
\subsubsection{Channel Model}
To model the FSO communication channel gain \( h_2 \) for the link from HAP to User, we consider the combined effects of attenuation losses \( h_{p2} \), atmospheric turbulence \( h_{a2} \), and GML \( h_{g2} \). Incorporating these factors, the channel gain \( h_2 \) can be expressed as
\begin{equation}
    h_2 = h_{p2}  h_{a2}  h_{g2}.
\end{equation}

Let \( \zeta_p \) denote the reflection efficiency.  The absorption occurring at the IRS can be considered part of the overall atmospheric loss, modeled by \( h_{p2} \), which is expressed as \cite[Eq.(7)]{OIRS1}
\begin{equation}
    h_{p2} =  \zeta_p 10^{-\kappa (d_{HI}+d_{IU})/10},
\end{equation}
where \( \kappa \) is the absorption coefficient.  { Due to non-specular reflection, phase deviations may arise at the OIRS, as characterized in \cite[Eq.~(28)]{OIRS1}. However, in this work, we assume perfect phase control at the OIRS, i.e., the induced phase shifts are ideal and free from such deviations.} We utilize the 3D GML model  described in \cite{OIRS1}, which is given as \cite[Eq.~(32)]{OIRS1}
\begin{equation}
\label{PDFhg2}
\begin{aligned}
        f_{h_{g2}}(h_{g2}) = &\frac{\varpi}{A_{02}} \left( \frac{h_{g2}}{A_{02}} \right)^{\frac{(1 + q_g^2)\varpi}{2q_g} - 1} \\&\times I_0 \left( -\frac{(1 - q_g^2)\varpi}{2q_g} \ln \left( \frac{h_{g2}}{A_{02}} \right) \right),  0 \leq h_g \leq A_{02}.
\end{aligned}
\end{equation}
we assume that \( \phi_r = \pi \) and \( \theta_{rl} = 0 \). Under these conditions, the parameters in the formula can be expressed as: \( \varpi = \frac{(1 + q_g^2)t_g}{4q_g\Omega} \), $t_g = \frac{\pi a_l^2}{4 \nu_1 \nu_2} \sqrt{\frac{\pi  \text{erf}(\nu_1)\text{erf}(\nu_2)}{\nu_1 \nu_2 \exp\left( -(\nu_1^2 + \nu_2^2) \right)}}$ \( A_{02} = \text{erf}(\nu_1) \text{erf}(\nu_2) \),  { $\nu_1 = \frac{a_l}{ \omega(d_{HI}+d_{IU}, \hat\omega_{02})}\sqrt{\frac{\pi}{2}}$, $\nu_2 = \frac{a_l}{ \omega(d_{HI}+d_{IU}, \omega_{02})}\sqrt{\frac{\pi}{2}}$,  ${{\omega}}({d}, {\omega}_{02}) = {{\omega}}_{02} \sqrt{1 + \left( \frac{{d}\lambda}{\pi {{\omega}}_{0}^2} \right)^2 }$ is the beam waist for a Gaussian beam with initial beam waist ${{\omega}}_{0}$  and propagation distance ${d}$, $\omega_{02}$ is the initial beam waist from the HAP $\hat{\omega}_0$ is determined by solving the following equation
\begin{equation}
    \omega(d_{HI},{\hat{\omega}}_{0}) = \frac{\cos(\theta_{r})\omega(d_{HI},{\omega}_0)}{\cos(\theta_{i})}, 
\end{equation}}
\( \text{erf}(\cdot) \) is the error function. \( q_g = \left( \frac{\min\{\sigma_{u_1}^2, \sigma_{u_2}^2\}}{\max\{\sigma_{u_1}^2, \sigma_{u_2}^2\}} \right)^{1/2} \), \( \Omega = \sigma_{u_1}^2 + \sigma_{u_2}^2 \), \( \sigma_{u_1}^2 = \frac{\cos^2 \theta_r}{\cos^2 \theta_i} \sigma_s^2 + \frac{\sin^2(\theta_i + \theta_r)}{\cos^2 \theta_i} \sigma_r^2 + \sigma_l^2 \),  \( \sigma_{u_2}^2 = \sigma_s^2 + \sigma_l^2 \),  where \( \sigma_s^2 \), \( \sigma_r^2 \), and \( \sigma_l^2 \) represent the fluctuations in the positions of the laser source, OIRS and receiving lens respectively. \( I_0(\cdot) \) denotes the modified Bessel function of the first kind. { 
The expression in \eqref{PDFhg2}  complicates analytical integration and renders closed-form performance analysis intractable. To address this issue, we propose the following approximation for the PDF of $h_{g2}$.
\begin{theorem}
The PDF of $h_{g2}$ can be approximated as
\begin{equation}
\label{A2}
\begin{aligned}
    f_{h_{g2}}(h_{g2}) \approx&\,\, \frac{\varpi \mathcal{N}}{A_{02}} \left( \frac{h_{g2}}{A_{02}} \right) \left( \frac{(1+q_g^2)\varpi}{2q_g} \right)^{-1} \\&\times\sum_{k=0}^{N_k} \frac{1}{k! \Gamma(1+k)} \left( \frac{(1 - q_g^2)\varpi}{4q_g} \ln \left( \frac{h_{g2}}{A_{02}} \right) \right)^{2k},
    \end{aligned}
\end{equation} 
where $ \label{mathcalN}
    \mathcal{N} = \left\{ \sum_{k=0}^{N_k} \frac{2 q_g \Gamma(1 + 2k)}{k! \Gamma(1 + k)(1 + q_g^2)} \left[ \frac{(1 - q_g^2)\varpi}{2(1 + q_g^2)\varpi} \right]^{2k} \right\}^{-1}$.
\begin{proof}
    See Appendix \ref{A0}.
\end{proof}
\end{theorem}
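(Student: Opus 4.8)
The plan is to reduce the special-function density in \eqref{PDFhg2} to an elementary truncated series by expanding the modified Bessel function $I_0$, then restoring unit mass through the constant $\mathcal{N}$. First I would invoke the ascending series $I_0(z)=\sum_{k=0}^{\infty}\frac{1}{k!\,\Gamma(1+k)}(z/2)^{2k}$, which converges for all $z$ and, being even, is insensitive to the sign of its argument. Substituting $z=-\frac{(1-q_g^2)\varpi}{2q_g}\ln(h_{g2}/A_{02})$ turns $I_0$ into $\sum_{k=0}^{\infty}\frac{1}{k!\,\Gamma(1+k)}\left(\frac{(1-q_g^2)\varpi}{4q_g}\ln(h_{g2}/A_{02})\right)^{2k}$, which is precisely the summand appearing in \eqref{A2}. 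At this stage the exact PDF is an infinite sum of terms of the type $\left(h_{g2}/A_{02}\right)^{\,p-1}\big(\ln(h_{g2}/A_{02})\big)^{2k}$ with $p=\frac{(1+q_g^2)\varpi}{2q_g}$, and the approximation consists simply of keeping the first $N_k+1$ of them.

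Because truncation destroys the unit-mass property, the second step is to fix $\mathcal{N}$ so that the truncated density integrates to $1$. I would do this by integrating term by term over $[0,A_{02}]$: after the substitution $u=h_{g2}/A_{02}$ each term reduces to the log-moment integral $\int_0^1 u^{\,p-1}(\ln u)^{2k}\,du$, which via $u=e^{-t}$ evaluates to $\Gamma(1+2k)/p^{\,2k+1}$. Collecting constants and simplifying the recurring ratio $c/p$, with $c=\frac{(1-q_g^2)\varpi}{4q_g}$, down to $\frac{1-q_g^2}{2(1+q_g^2)}$, the partial-sum normalization collapses exactly into the bracketed expression defining $\mathcal{N}^{-1}$ (the $\varpi$ factors cancel, leaving the stated closed form).

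As an internal-consistency check I would confirm that, as $N_k\to\infty$, the normalization sum equals $1$ so that $\mathcal{N}\to1$ and the approximation becomes exact. Using $\frac{\Gamma(1+2k)}{k!\,\Gamma(1+k)}=\binom{2k}{k}$, this reduces to $\sum_{k\ge0}\binom{2k}{k}x^{k}=(1-4x)^{-1/2}$ with $x=\left(\frac{1-q_g^2}{2(1+q_g^2)}\right)^2$; the radicand simplifies to $4q_g^2/(1+q_g^2)^2$ via $(1+q_g^2)^2-(1-q_g^2)^2=4q_g^2$, producing the factor $\frac{2q_g}{1+q_g^2}$ that cancels the prefactor and yields exactly unit mass. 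This simultaneously validates the closed form of $\mathcal{N}$ and shows the only error incurred is the tail of a convergent series.

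The main obstacle I anticipate is not the Bessel expansion (routine) but quantifying that a modest $N_k$ suffices, i.e. controlling the discarded tail $\sum_{k>N_k}\binom{2k}{k}x^{k}$. Since $x<\tfrac14$ for $q_g\in(0,1]$, the central-binomial terms decay geometrically with ratio $4x<1$, so for $q_g$ bounded away from $0$ convergence is fast and a small $N_k$ is adequate; I would make this precise by bounding the tail against a geometric series with ratio $4x$, noting that convergence slows only in the strongly anisotropic limit $q_g\to0$ where $4x\to1$. A secondary subtlety is the $h_{g2}\to0$ behaviour, where each truncated term carries a $(\ln(h_{g2}/A_{02}))^{2k}$ factor; this is harmless since it is dominated by $\left(h_{g2}/A_{02}\right)^{\,p-1}$ for $p>1$, keeping the approximate density integrable and the term-by-term integration legitimate.
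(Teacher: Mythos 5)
Your proposal is correct and follows essentially the same route as the paper's Appendix A: expand $I_0$ via its ascending series (the paper cites Gradshteyn--Ryzhik Eq.~(8.444)), truncate at $N_k$, and recover unit mass by evaluating the log-moment integral $\int_0^1 u^{p-1}(\ln u)^{2k}\,du=\Gamma(1+2k)/p^{2k+1}$ (the paper's Eq.~(4.272.6)), which yields exactly the stated $\mathcal{N}$. Your additional consistency check that $\mathcal{N}\to 1$ as $N_k\to\infty$ via the central-binomial generating function, and the geometric tail bound governing how large $N_k$ must be, go beyond what the paper proves and are a worthwhile strengthening, but the core argument is identical.
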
}

The atmospheric turbulence is also modeled using the GG distribution, as provided in \cite[Eq.(56), pp.462]{Laserbeam}
\begin{align}
\label{pdfha2}
    f_{h_{a 2}}(h_{a 2})=\frac{2(\alpha_2 \beta_2)^{\frac{\alpha_2+\beta_2}{2}}h_{a2}^{\frac{\alpha_2+\beta_2}{2}-1}}{\Gamma(\alpha_2) \Gamma(\beta_2)}  K_{\alpha_2-\beta_2}\!\left(2 \sqrt{\alpha_2 \beta_2 h_{a 2}}\right)\!,
\end{align}
where $\alpha_2=1/[\exp(\sigma_{\ln X2}^2)-1]$, and $\beta_2=1/[\exp(\sigma_{\ln Y2}^2 )-1]$. The large-scale log variance $\sigma_{\ln X2}^2$ and the small-scale log variance $\sigma_{\ln Y2}^2$ are provided by \cite[Eqs. (7) and (8)]{ata2022haps} as
\begin{align}
    \sigma_{\ln X2}^2=\frac{0.49\sigma_{B2}^2}{[1+1.11\sigma_{B2}^{12/5} ]^{7/6}},    \sigma_{\ln Y2}^2=\frac{0.51\sigma_{B2}^2}{[1+0.69\sigma_{B2}^{12/5} ]^{5/6}},
\end{align} 
where $\sigma_{B2}^2$ represents the Rytov variance. For the downlink propagation case, $\sigma_{B2}^2$ is expressed as \cite[Eq.~(11)]{ata2022haps}
\begin{align}
       \nonumber  \sigma_{B2}^{2} = \,\,&2.25 {k_w}^{7 / 6}\left(H_H-H_I\right)^{5 / 6} \sec ^{11 / 6}(\zeta_2) \\ 
       \nonumber &\times\operatorname{Re}\Big(\int_{H_I}^{H_H} C_{n}^{2}(l){\left( \frac{l-H_I}{H_H-H_I}\right)}^{5/6} \mathrm{d} l\Big) \nonumber \\&+2.25 {k_w}^{7 / 6}\left(H_I-H_U\right)^{5 / 6} \sec ^{11 / 6}(\zeta_3) \\ 
       \nonumber &\times\operatorname{Re}\Big(\int_{H_U}^{H_I} C_{n}^{2}(l){\left( \frac{l-H_U}{H_I-H_U}\right)}^{5/6} \mathrm{d} l\Big) \nonumber.
\end{align}
\begin{corollary}
The closed-form expression for the PDF of the channel gain of the FSO link from the HAP to the User, \( h_2 \), can be approximated by
\begin{equation}
\scalebox{1}{$\begin{aligned}
\label{PDFh2}
f_{h_2}&(h_2)= \frac{\varpi \mathcal{N}}{h_2 \Gamma(\alpha_2)\Gamma(\beta_2)} \sum_{k=0}^{N_k} \frac{\Gamma(1+2k)}{k! \Gamma(1+k)} \left( \frac{(1 - q_g^2) \varpi}{4 q_g} \right)^{2k}\\& \times{\rm G}_{2k+1, 2k+3}^{2k+3,0} \left[ \frac{\alpha_2 \beta_2 h_2}{A_{02} h_{p2}} \middle|
\begin{array}{c}
 {\left\{\frac{(1 + q_g^2) \varpi}{2 q_g} +1\right\}}_{2k+1}\\\alpha_2, \beta_2, {\left\{\frac{(1 + q_g^2) \varpi}{2 q_g} \right\}}_{2k+1}
\end{array} 
\right],
\end{aligned}$}
\end{equation}
where  \( \left\{a \right\}_{2k+1} \) means there are \( 2k+1 \) instances of \( a \). Strictly speaking, (\ref{PDFh2}) is an approximate expression, and the accuracy depends on the value of \( N_k \). The larger \( N_k \) is, the more accurate the expression becomes.
\end{corollary}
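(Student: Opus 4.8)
The plan is to work entirely in the Mellin domain. Since $h_2=h_{p2}h_{a2}h_{g2}$ is a product of independent factors (with $h_{p2}$ deterministic), its Mellin transform factorizes as $\mathbb{E}[h_2^{s-1}]=h_{p2}^{\,s-1}\,\mathbb{E}[h_{a2}^{s-1}]\,\mathbb{E}[h_{g2}^{s-1}]$, and the target density is then recovered by the inverse Mellin transform $f_{h_2}(h_2)=\frac{1}{2\pi\mathrm{i}}\int_{\mathcal{L}}h_2^{-s}\,\mathbb{E}[h_2^{s-1}]\,\mathrm{d}s$, which I will read off directly as a Meijer-$G$ function. First I would record the Gamma--Gamma moment $\mathbb{E}[h_{a2}^{s-1}]=\Gamma(\alpha_2+s-1)\Gamma(\beta_2+s-1)\big/[\Gamma(\alpha_2)\Gamma(\beta_2)(\alpha_2\beta_2)^{s-1}]$, which follows from the Mellin transform of the modified Bessel function in \eqref{pdfha2} (equivalently, from the $G_{0,2}^{2,0}$ representation of the Gamma--Gamma density).

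The substantive step is the Mellin transform of the misalignment factor via its finite-series approximation from Theorem~1. Writing $\mu=\tfrac{(1+q_g^2)\varpi}{2q_g}$, each term of \eqref{A2} contributes an integral of the form $\int_0^{A_{02}}u^{s-1}(u/A_{02})^{\mu-1}\big(\ln(u/A_{02})\big)^{2k}\,\mathrm{d}u$ over the bounded support $[0,A_{02}]$. I would evaluate it by the substitution $t=\ln(u/A_{02})$, turning it into $A_{02}^{\,s+\mu-1}\int_{-\infty}^{0}e^{(s+\mu-1)t}t^{2k}\,\mathrm{d}t=A_{02}^{\,s+\mu-1}\,(2k)!\big/(s+\mu-1)^{2k+1}$, valid on the contour where $\mathrm{Re}(s+\mu-1)>0$. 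This identity is the crux: the $2k$-th power of the logarithm is exactly what produces an order-$(2k+1)$ pole $1/(s+\mu-1)^{2k+1}$, and it is the mechanism by which the repeated parameters in \eqref{PDFh2} arise.

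With the three factors in hand I would multiply them, rewrite the pole as $1/(s+\mu-1)^{2k+1}=\Gamma(s+\mu-1)^{2k+1}\big/\Gamma(s+\mu)^{2k+1}$, and shift the contour variable $s\mapsto s+1$. The shift simultaneously extracts the overall $1/h_2$ prefactor, collapses the Gamma arguments into $\Gamma(\alpha_2+s)$, $\Gamma(\beta_2+s)$, $\Gamma(\mu+s)^{2k+1}$ in the numerator and $\Gamma(\mu+1+s)^{2k+1}$ in the denominator, and combines the surviving powers $A_{02}^{\,s}(\alpha_2\beta_2)^{-s}h_{p2}^{-s}$ into the single argument $\tfrac{\alpha_2\beta_2 h_2}{A_{02}h_{p2}}$. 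Comparing with the Mellin--Barnes definition of the Meijer-$G$ function, the resulting contour integral is precisely $G_{2k+1,2k+3}^{\,2k+3,0}$ with $2k+1$ coincident lower parameters equal to $\mu$, upper parameters equal to $\mu+1$, and the extra lower parameters $\alpha_2,\beta_2$. Summing over $k=0,\dots,N_k$ and collecting constants then reproduces \eqref{PDFh2}.

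I expect the main obstacle to be bookkeeping rather than any deep idea: correctly tracking the powers of $A_{02}$, $\alpha_2\beta_2$, and $h_{p2}$ through the contour shift so that they coalesce into a single Meijer-$G$ argument, and justifying the interchange of the (finite) sum with the Mellin inversion together with the pole placement $\mathrm{Re}(s+\mu-1)>0$ needed for the logarithmic integral to converge. The conceptual heart---recognizing that $1/(s+\mu-1)^{2k+1}$ is the Mellin signature of $2k+1$ repeated Meijer-$G$ parameters---is the one genuinely non-routine observation.
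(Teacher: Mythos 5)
Your proposal is correct and is essentially the paper's own argument in Mellin-transform language: the paper writes $f_{h_2}$ as the Mellin-type convolution $\int_0^1 f_{h_{g2}}(A_{02}t)\,f_{h_{a2}}\bigl(h_2/(A_{02}h_{p2}t)\bigr)\,\mathrm{d}t/(t h_{p2})$, inserts the Mellin--Barnes representation of the Gamma--Gamma (Bessel) kernel, and evaluates the same inner integral $\int_0^1 t^{a-1}(\ln t)^{2k}\,\mathrm{d}t=\Gamma(2k+1)/a^{2k+1}$ (via \cite[Eq.~(4.272.6)]{intetable}) that you obtain by the substitution $t=\ln(u/A_{02})$. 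Your key observation --- that the order-$(2k+1)$ pole is the Mellin signature of $2k+1$ repeated Meijer-$G$ parameters --- is exactly the mechanism used in Appendix B of the paper, so the two derivations coincide.
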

\begin{proof}
See Appendix \ref{A}.
\end{proof}
\subsubsection{ SNR Statistics from HAP to User}
Let \( s_2(t) \) be the signal transmitted by the HAP. The received signal by the User can be written as
\begin{align}
    y_{H2}(t)=\sqrt{\eta_2 P_{02} }h_2 s_2(t) + n_{U}(t),
\end{align}
where \( P_{02} \) is the transmit power, and \( \eta_2 \) is the optical-to-electrical conversion factor.  \( n_{U}(t) \sim \mathcal{N}\left(0, \sigma_{U}^{2}\right) \) represents Gaussian noise. Therefore, the SNR of the FSO link \( \gamma_{H2} \) can be expressed as
\begin{align}
\label{gammaHoverline2}
\gamma_{H2}=\overline{\gamma}_{H2} h_2^{r_2},
\end{align}
where \( \overline{\gamma}_{H2} = \frac{\left(\eta_2 P_{02}\right)^{r_2/2}}{\sigma_{U}^{2}} \).

{ \begin{theorem}
The PDF and CDF of $\gamma_{H2}$ are given as (\ref{PDFH2}) and (\ref{CDFH2}).
\begin{proof}
    The PDF of $\gamma_{H2}$ can be obtained from (\ref{PDFh2}) by applying the random variable transformation in (\ref{gammaHoverline2}) as (\ref{PDFH2}). Then, by using (\ref{PDFH2}) and applying \cite[Eq.~(2.24.2.3)]{prudnikov1}, we get the CDF of $\gamma_{H2}$ as (\ref{CDFH2}).
\end{proof}
\end{theorem}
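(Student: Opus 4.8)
The plan is to obtain the two results in sequence: the PDF of $\gamma_{H2}$ follows from a monotone change of variables applied to the PDF of $h_2$ already derived in the preceding corollary, i.e. (\ref{PDFh2}), and the CDF then follows by integrating this PDF term by term against a tabulated Meijer-$G$ integral.

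First I would note that the map $h_2 \mapsto \gamma_{H2} = \overline{\gamma}_{H2}\,h_2^{r_2}$ in (\ref{gammaHoverline2}) is strictly increasing on $h_2 \ge 0$ for either detection mode $r_2\in\{1,2\}$, so its inverse $h_2 = (\gamma_{H2}/\overline{\gamma}_{H2})^{1/r_2}$ is single valued and the standard rule $f_{\gamma_{H2}}(\gamma_{H2}) = f_{h_2}(h_2)\,\left|dh_2/d\gamma_{H2}\right|$ applies, with Jacobian $\left|dh_2/d\gamma_{H2}\right| = \tfrac{1}{r_2\overline{\gamma}_{H2}}\,(\gamma_{H2}/\overline{\gamma}_{H2})^{1/r_2-1}$. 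Substituting the inverse into (\ref{PDFh2}) turns the argument of every Meijer-$G$ term from $\tfrac{\alpha_2\beta_2 h_2}{A_{02}h_{p2}}$ into $\tfrac{\alpha_2\beta_2}{A_{02}h_{p2}}(\gamma_{H2}/\overline{\gamma}_{H2})^{1/r_2}$, while the explicit $1/h_2$ prefactor in (\ref{PDFh2}) combines with the Jacobian to produce a single $1/(r_2\gamma_{H2})$ scaling. Collecting the constants and retaining the finite summation over $k$ then yields the claimed (\ref{PDFH2}).

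For the CDF I would write $F_{\gamma_{H2}}(\gamma_{H2}) = \int_0^{\gamma_{H2}} f_{\gamma_{H2}}(t)\,dt$ and interchange the finite $k$-sum with the integral, which is justified because the sum has only $N_k+1$ terms. Each summand is a power of $t$ times a single Meijer-$G$ function whose argument is proportional to $t^{1/r_2}$, i.e. exactly the structure $\int_0^x t^{s-1}\,{\rm G}_{p,q}^{m,n}[\omega\,t^{c}\,|\,\cdot]\,dt$ handled by \cite[Eq.~(2.24.2.3)]{prudnikov1}. That formula returns a closed form by raising the orders to ${\rm G}_{p+1,q+1}^{m,n+1}$ and inserting one additional upper/lower parameter pair fixed by the exponent $s$ and the argument power $c=1/r_2$. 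Applying it to every term and re-summing over $k$ produces (\ref{CDFH2}).

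The step I expect to be the main obstacle is the parameter bookkeeping in the Meijer-$G$ lists under the fractional argument power $c=1/r_2$: for the IM/DD case ($r_2=2$) the $t^{1/2}$ dependence forces the argument rescaling and parameter adjustment prescribed by \cite[Eq.~(2.24.2.3)]{prudnikov1}, and one must verify that the augmented upper and lower sequences---already carrying the $2k+1$ repeated entries $\{(1+q_g^2)\varpi/(2q_g)\}$ inherited from (\ref{PDFh2})---remain internally consistent and that the poles of the integrand separate as required for the defining contour integral to converge over the admissible ranges of $\alpha_2,\beta_2$ and the GML parameters.
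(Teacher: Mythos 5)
Your proposal is correct and follows essentially the same route as the paper: the PDF is obtained by the monotone change of variables $\gamma_{H2}=\overline{\gamma}_{H2}h_2^{r_2}$ applied to (\ref{PDFh2}), and the CDF follows by term-by-term integration of the resulting Meijer-$G$ series via \cite[Eq.~(2.24.2.3)]{prudnikov1}. Your added detail on the Jacobian, the interchange of the finite sum with the integral, and the parameter bookkeeping under the argument power $1/r_2$ simply fills in steps the paper leaves implicit.
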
}

\begin{figure*}
{\begin{equation} 
\label{PDFH2}
\scalebox{0.93}{$\begin{aligned}
f_{\gamma_{H2}}(\gamma_{H2}) = \frac{\varpi \mathcal{N}}{r_2 \gamma_{H2} \Gamma(\alpha_2) \Gamma(\beta_2)} \sum_{k=0}^{N_k} \frac{\Gamma(1 + 2k)}{k! \Gamma(1 + k)} \left[ \frac{(1 - q_g^2) \varpi}{4q_g} \right]^{2k}
\rm{G}_{2k+1, 2k+3}^{2k+3, 0} \left[ \frac{\alpha_2 \beta_2}{A_{02} h_{p2}} \left( \frac{\gamma_{H2}}{\overline{\gamma}_{H2}} \right)^{\frac{1}{r_2}} \middle| 
\begin{array}{c} 
{\left\{\frac{(1 + q_g^2) \varpi}{2 q_g} +1\right\}}_{2k+1} \\ \alpha_2, \beta_2, {\left\{\frac{(1 + q_g^2) \varpi}{2 q_g} \right\}}_{2k+1}
\end{array} 
\right]
\end{aligned}$}
\end{equation}}
{\begin{equation}
\label{CDFH2}
\scalebox{0.93}{$\begin{aligned}
    &F_{\gamma_{H2}} (\gamma_{H2})= 1 - \frac{\varpi \mathcal{N}}{\Gamma(\alpha_2) \Gamma(\beta_2)} \sum_{k=0}^{N_k} \frac{\Gamma(1 + 2k)}{k! \Gamma(1 + k)} \left[ \frac{(1 - q_g^2) \varpi}{4q_g} \right]^{2k} {\rm G}_{2k+2, 2k+4}^{2k+4, 0} \left[ \frac{\alpha_2 \beta_2}{A_{02} h_{p2}} \left( \frac{\gamma_{H2}}{\overline{\gamma}_{H2}} \right)^{\frac{1}{r_2}} \middle|\!\! 
\begin{array}{c} 
1, {\left\{\frac{(1 + q_g^2) \varpi}{2 q_g} +1\right\}}_{2k+1} \\ 
0, \alpha_2, \beta_2, {\left\{\frac{(1 + q_g^2) \varpi}{2 q_g} \right\}}_{2k+1} 
\end{array} 
\!\!\!\right]
\end{aligned}$}
\end{equation}}
\hrule
\end{figure*}

\subsubsection{Performance Analysis}
A concise and unified expression for the average BER applicable to various coherent M-QAM and M-PSK modulation schemes, along with the OOK modulation technique, is given by \cite[Eq.~(21)]{dualhopFSO}
\begin{align}
\label{ABERH2}\overline{{P}}_{e2}=\delta_{B} \sum_{m=1}^{N_{B}} I_2\left(p_{B}, q_{B m}\right),
\end{align}
{ where $N_{B}$, $\delta_{B}$ , $p_{B}$, and $q_{B m}$ are detailed in Tab \ref{tab:my_label}. }
\begin{table*}[ht]
\caption{MODULATION PARAMETERS}
\centering
{ \begin{tabular}{|l|l|l|l|l|l|}
\hline \text { Modulation } & $\boldsymbol{\delta_B}$ & $\boldsymbol{p_B}$ & $\boldsymbol{q}_{\boldsymbol{Bm}}$ & $\boldsymbol{N_B}$ & \text { Detection } \\
\hline \text { M-PSK } & $\frac{2}{\max \left(\log _{2} M, 2\right)}$ & 1 / 2 & $\sin ^{2}\left(\frac{(2 k-1) \pi}{M}\right)\log _{2} M$ & $\max \left(\frac{M}{4}, 1\right)$ & \text { Heterodyne } \\
\hline \text { M-QAM } & $\frac{4}{\log _{2} M}\left(1-\frac{1}{\sqrt{M}}\right)$ & 1 / 2 &$\frac{3(2 k-1)^{2}}{2(M-1)}\log _{2} M $& $\frac{\sqrt{M}}{2}$ & \text { Heterodyne } \\
\hline \text { OOK } & 1 & 1 / 2 & 1 / 2 & 1 & \text { IM/DD } \\
\hline
\end{tabular}}
\label{tab:my_label}
\end{table*}
$I_2\left(p_{B}, q_{B k}\right)$ is defined as
\begin{equation}
\label{DI2}
\begin{aligned}
    I_2\left(p_{B}, q_{B k}\right)=&\frac{q_{Bm}^{p_{B}}}{2 \Gamma\left(p_{B}\right)} \int_{0}^{\infty} x^{p_{B}-1} \exp \left(-q_{Bm} x\right) F_{\gamma_{H2}}(x) d x.
\end{aligned}
\end{equation}

\begin{corollary}
The closed-form expression of $I_2\left(p_{B}, q_{B m}\right)$ is given as
\begin{equation}
\label{ABERIH2}
\begin{aligned}
I_2&(p_{B}, q_{B m})= \frac{1}{2}
- \frac{\varpi \mathcal{N} r_2}{2 \Gamma(p_B) \Gamma(\alpha_2) \Gamma(\beta_2)} \\&\times\sum_{k=0}^{N_k} \frac{\Gamma(1 + 2k)}{k! \Gamma(1 + k)} \left( \frac{(1 - q_g^2) \varpi}{4 q_g} \right)^{2k}
{\rm H}_{2k+3,2k+4}^{2k+4, 1} \\&\left[ \frac{\left( \frac{\alpha_2 \beta_2}{A_{02} h_{p2}} \right)^{r_2}}{\overline{\gamma}_{H2} q_{Bm}} \middle| \!\!
\text{\scriptsize$\begin{array}{c} 
(1 - p_B, 1)(1, r_2) {\left\{\left(\frac{(1 + q_g^2) \varpi}{2 q_g} +1,r_2\right)\right\}}_{2k+1}\\ 
(0, r_2) (\alpha_2, r_2)(\beta_2, r_2) {\left\{\left(\frac{(1 + q_g^2) \varpi}{2 q_g} ,r_2\right)\right\}}_{2k+1}
\end{array}$}\!\!\right].
\end{aligned}
\end{equation}
\begin{proof}
See Appendix \ref{B}.
\end{proof}
\end{corollary}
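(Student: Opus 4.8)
The plan is to insert the CDF (\ref{CDFH2}) into the integral definition (\ref{DI2}) and evaluate it term by term. The constant ``$1$'' in $F_{\gamma_{H2}}$ contributes $\frac{q_{Bm}^{p_B}}{2\Gamma(p_B)}\int_0^\infty x^{p_B-1}e^{-q_{Bm}x}\,dx = \frac{1}{2}$, which is precisely the leading term of (\ref{ABERIH2}). The remaining task is, for each summation index $k$, to evaluate $J_k = \int_0^\infty x^{p_B-1}e^{-q_{Bm}x}\,{\rm G}_{2k+2,2k+4}^{2k+4,0}\!\left[\omega\,x^{1/r_2}\right]dx$, where the constant $\omega = \frac{\alpha_2\beta_2}{A_{02}h_{p2}}\overline{\gamma}_{H2}^{-1/r_2}$ is read off from the $G$-argument in (\ref{CDFH2}) after the substitution $\gamma_{H2}\mapsto x$ indicated by (\ref{gammaHoverline2}).

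I would evaluate $J_k$ through the Mellin--Barnes representation of the Meijer-$G$ function. Writing ${\rm G}_{2k+2,2k+4}^{2k+4,0}[\omega x^{1/r_2}] = \frac{1}{2\pi i}\int_{\mathcal L}\Phi(\xi)\,(\omega x^{1/r_2})^{-\xi}\,d\xi$, with $\Phi(\xi)$ the ratio of Gamma products built from the upper parameters $\{1,\{\frac{(1+q_g^2)\varpi}{2q_g}+1\}_{2k+1}\}$ and lower parameters $\{0,\alpha_2,\beta_2,\{\frac{(1+q_g^2)\varpi}{2q_g}\}_{2k+1}\}$, and interchanging the two integrals (valid by absolute convergence on $\mathcal L$), the inner $x$-integral is elementary: $\int_0^\infty x^{p_B-1-\xi/r_2}e^{-q_{Bm}x}\,dx = q_{Bm}^{-p_B+\xi/r_2}\Gamma(p_B-\xi/r_2)$. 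This leaves a single contour integral in which the extra factor $\Gamma(p_B-\xi/r_2)$ multiplies $\Phi(\xi)$ and the powers of $\omega$ and $q_{Bm}$ merge into $(\omega\,q_{Bm}^{-1/r_2})^{-\xi}$. (Equivalently, one may write $e^{-q_{Bm}x}={\rm G}_{0,1}^{1,0}[q_{Bm}x\,|\,{-};0]$ and invoke a tabulated two-$G$-function Mellin integral, but the Mellin--Barnes route makes the parameter mapping transparent.)

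To recognize this as a Fox-$H$ function, I would then rescale the contour via $\xi = r_2 s$, so that $d\xi = r_2\,ds$ (the origin of the overall factor $r_2$ in (\ref{ABERIH2})). Every Gamma argument inherited from $\Phi$ picks up the coefficient $r_2$ on $s$, mapping each original $G$-parameter $a$ to the Fox-$H$ pair $(a,r_2)$; the driving variable becomes $(\omega^{r_2}/q_{Bm})^{-s}$ with $\omega^{r_2}/q_{Bm} = \frac{(\alpha_2\beta_2/(A_{02}h_{p2}))^{r_2}}{\overline{\gamma}_{H2}q_{Bm}}$; and the exponential-induced factor rewrites as $\Gamma(p_B-s)=\Gamma(1-(1-p_B)-s)$, entering the numerator as the single new upper parameter $(1-p_B,1)$ that fixes the second superscript $n=1$. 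Cancelling the $q_{Bm}^{p_B}$ from (\ref{DI2}) against the $q_{Bm}^{-p_B}$ produced by the $x$-integral and collecting the $k$-dependent constants then reproduces (\ref{ABERIH2}) exactly. The main obstacle is precisely this final bookkeeping: because the exponential's Mellin transform is linear in $\xi$ with slope $1/r_2$ while all $G$-parameters have slope $1$, no rescaling can bring every Gamma factor to a common coefficient — the original parameters become $(\cdot,r_2)$ pairs while $(1-p_B,1)$ retains slope $1$ — and this genuine coefficient mismatch is exactly why a Meijer-$G$ collapse is impossible and the Fox-$H$ is unavoidable; correspondingly, one must verify that the contour $\mathcal L$ keeps the poles of $\Gamma(p_B-\xi/r_2)$ separated from those of $\Phi(\xi)$ to guarantee convergence of the representation.
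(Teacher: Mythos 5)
Your proposal is correct and follows essentially the same route as the paper's Appendix~B: substitute the CDF \eqref{CDFH2} into \eqref{DI2}, expand the Meijer-$G$ via its Mellin--Barnes representation, interchange the order of integration so the inner $x$-integral reduces to a Gamma function, and identify the remaining contour integral as the Fox-$H$ function in \eqref{ABERIH2}. Your explicit contour rescaling $\xi=r_2 s$ (explaining the prefactor $r_2$ and the mixed coefficients $(1-p_B,1)$ versus $(\cdot,r_2)$) and the convergence remark are just a more careful spelling-out of the bookkeeping the paper leaves implicit.
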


The ergodic capacity of the FSO link from HAP to User, where the FSO link operates can be expressed as presented in \cite[Eq.~(26)]{lapidoth}, as follows
\begin{equation}
\begin{aligned}\label{DefC2}
\overline{C}_2& = \int_{0}^{\infty} \ln(1+c_0\,\gamma_{H2}) f_{\gamma_{H2}}(\gamma_{H2})\,d\gamma_{H2}.
\end{aligned}
\end{equation}

\begin{corollary}
The closed-form expression of $\overline{C}_2$ is given as \eqref{C2}.

\begin{equation}
\label{C2}
\scalebox{1}{$\begin{aligned}
&\overline{C}_2 = \frac{\varpi \mathcal{N} r_2}{\Gamma(\alpha_2)\Gamma(\beta_2)} \sum_{k=0}^{N_k} \frac{\Gamma(1 + 2k) \left[ \frac{(1 - q_g^2)\varpi}{4 q_g} \right]^{2k}}{k! \Gamma(1 + k)} 
{\rm H}_{3+2k, 2k+4}^{2k+4, 1}\\& \left[ \frac{\left( \frac{\alpha_2 \beta_2}{A_{02} h_{p2}} \right)^{r_2}}{\overline{\gamma}_{H2} c_0} \middle| \!\!
\begin{array}{c} 
(0, 1)(1, r_2) {\left\{\left({(1 + q_g^2) \varpi}/{(2 q_g)} +1,r_2\right)\right\}}_{2k+1}\\ 
(0, r_2) (\alpha_2, r_2)(\beta_2, r_2) {\left\{\left(\frac{(1 + q_g^2) \varpi}{2 q_g} ,r_2\right)\right\}}_{2k+1}
\end{array}\!\!\right]
\end{aligned}$}
\end{equation}

\begin{proof}
See Appendix \ref{C}.
\end{proof}
\end{corollary}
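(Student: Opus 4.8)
The plan is to evaluate the defining integral \eqref{DefC2} by collapsing it to a single Mellin--Barnes contour integral and then identifying that integral as a Fox-$H$ function. First I would insert the closed-form PDF \eqref{PDFH2} into \eqref{DefC2} and interchange the finite sum over $k$ with the integration over $\gamma_{H2}$; because $N_k$ is finite this interchange is immediate, and it leaves, for each $k$, the integral $\int_0^\infty \gamma_{H2}^{-1}\ln(1+c_0\gamma_{H2})\,{\rm G}_{2k+1,2k+3}^{2k+3,0}\!\left[b\,(\gamma_{H2}/\overline{\gamma}_{H2})^{1/r_2}\right]d\gamma_{H2}$, with $b=\alpha_2\beta_2/(A_{02}h_{p2})$ and the Meijer-$G$ parameters as in \eqref{PDFH2}.

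The core step is to represent the logarithm by its Meijer-$G$ form $\ln(1+c_0\gamma_{H2})={\rm G}_{2,2}^{1,2}\!\left[c_0\gamma_{H2}\,\middle|\,\begin{smallmatrix}1,1\\1,0\end{smallmatrix}\right]$ and to evaluate the resulting integral of a product of two Meijer-$G$ functions. Concretely, I would replace the PDF's Meijer-$G$ by its contour representation $\tfrac{1}{2\pi i}\int_{\mathcal L}\Phi(s)\,b^{-s}(\gamma_{H2}/\overline{\gamma}_{H2})^{-s/r_2}\,ds$, where $\Phi(s)$ is the associated ratio of Gamma functions, and interchange the order of integration; the inner $\gamma_{H2}$-integral is then the Mellin transform of the logarithm, which evaluates to a compact ratio of Gamma functions in the variable $s/r_2$. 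The decisive observation is that the PDF's Meijer-$G$ argument carries the power $1/r_2$: performing the substitution $t=s/r_2$ rescales every Gamma factor inherited from the PDF to width $r_2$, produces the argument $z=(\alpha_2\beta_2/(A_{02}h_{p2}))^{r_2}/(\overline{\gamma}_{H2}c_0)$, and turns the Meijer-$G$ (all of whose widths are unity) into a genuine Fox-$H$ with mixed widths $\{1,r_2\}$. Collecting all Gamma factors and matching them against the defining contour integral of the univariate Fox-$H$ yields \eqref{C2}, the leading pairs $(0,1)$ and $(1,r_2)$ coming from the logarithm's Mellin transform and the pairs $(\cdot,r_2)$ attached to $\alpha_2$, $\beta_2$ and the repeated entries $\tfrac{(1+q_g^2)\varpi}{2q_g}$ being inherited from the PDF.

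As an independent check of the parameter bookkeeping, I would re-derive the expression by integration by parts, writing $\overline{C}_2=c_0\int_0^\infty \bigl(1-F_{\gamma_{H2}}(\gamma_{H2})\bigr)/(1+c_0\gamma_{H2})\,d\gamma_{H2}$, inserting the CDF \eqref{CDFH2}, and using the elementary Mellin transform $\int_0^\infty \gamma^{\mu-1}/(1+c_0\gamma)\,d\gamma=c_0^{-\mu}\Gamma(\mu)\Gamma(1-\mu)$. This route recycles the Gamma factors already present in \eqref{CDFH2} and should reproduce the same Fox-$H$, providing a consistency test on the orders $(m,n,p,q)$.

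The step I expect to be the main obstacle is precisely the Mellin transform of the logarithm together with the ensuing bookkeeping. Unlike the exponential kernel in the BER integral \eqref{DI2}, whose Mellin transform contributes a single Gamma factor, the logarithm contributes a reflection-formula product $\Gamma(\cdot)\Gamma(1-\cdot)$, so one must track each resulting Gamma with care when assigning the Fox-$H$ orders and parameter pairs. A secondary technical point is verifying that the strip of analyticity of the logarithm's Mellin transform overlaps the contour $\mathcal L$ of the PDF's Meijer-$G$, so that the interchange of integrations and the closing of the contour are legitimate.
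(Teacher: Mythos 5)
Your proposal matches the paper's own derivation in Appendix C: the paper likewise substitutes the PDF \eqref{PDFH2} into \eqref{DefC2}, writes the Meijer-$G$ as a Mellin--Barnes contour integral, evaluates the inner integral via $\int_0^\infty x^{\mu-1}\ln(1+\gamma x)\,dx=\pi/(\mu\gamma^\mu\sin\mu\pi)$ converted to Gamma factors by the reflection formula (exactly the $\Gamma(\cdot)\Gamma(1-\cdot)$ pair you anticipate), and then reads off the Fox-$H$ with mixed widths $\{1,r_2\}$. The approach and the key technical step are essentially identical, so no further comparison is needed.
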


\begin{corollary}
The $s$-th moments of $\gamma_{H2}$ can be demonstrated as \eqref{MonmentsH2}.
{\begin{equation}
\label{MonmentsH2}
\scalebox{1}{$\begin{aligned}
\mathbb{E}(\gamma_{H2}^s) =\,& \frac{\varpi \mathcal{N}}{ \Gamma(\alpha_2) \Gamma(\beta_2)} \sum_{k=0}^{N_k} \frac{\Gamma(1 + 2k) \left[ \frac{(1 - q_g^2)\varpi}{4 q_g} \right]^{2k}}{k! \Gamma(1 + k)\left[{ sr_2 + \frac{(1 + q_g^2) \varpi}{2 q_g} }\right]^{1 + 2k}}\\&\times \Gamma(sr_2 + \alpha_2) \Gamma(sr_2 + \beta_2)
 \left[ \frac{A_{02} h_{p2}{\overline{\gamma}_{H2}^{\frac{1}{r_2}}}}{\alpha_2 \beta_2}   \right]^{sr_2}
\end{aligned}$}
\end{equation}}
\begin{proof}
The $s$-th moments of $\gamma_{H2}$ is given as
\begin{equation}
\label{D1}
    E(\gamma_{H2}^s) = \int_{0}^{\infty} \gamma_{H2}^s f_{\gamma_{H2}}(\gamma_{H2}) \, \mathrm{d}\gamma_{H2}.
\end{equation}
Substituting (\ref{PDFH2}) into  (\ref{D1}) and applying (1.5) of \cite{Mittal}, (\ref{MonmentsH2}) can be obtained.
\end{proof}
\end{corollary}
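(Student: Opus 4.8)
The plan is to evaluate the moment integral (\ref{D1}) directly by substituting the closed-form PDF (\ref{PDFH2}) and reducing each summand to a Mellin-type transform of a single Meijer-$G$ function. First I would insert (\ref{PDFH2}) into (\ref{D1}) and interchange the finite sum over $k$ with the integral, which is legitimate because $N_k$ is finite. The factor $\gamma_{H2}^{s}$ from the moment multiplies the $1/\gamma_{H2}$ prefactor of the PDF, so each term collapses to an integral of the form $\int_0^\infty \gamma_{H2}^{s-1}\,{\rm G}_{2k+1,2k+3}^{2k+3,0}\!\left[\tfrac{\alpha_2\beta_2}{A_{02}h_{p2}}(\gamma_{H2}/\overline{\gamma}_{H2})^{1/r_2}\right]d\gamma_{H2}$, i.e.\ a Mellin transform of a Meijer-$G$ function with a fractional-power argument.

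Next I would apply (1.5) of \cite{Mittal}, the Mellin-transform identity for a Meijer-$G$ function whose argument is scaled by a power; equivalently, substituting $u=(\gamma_{H2}/\overline{\gamma}_{H2})^{1/r_2}$ turns the fractional power into a linear argument, after which the standard Mellin transform of ${\rm G}$ applies with Mellin variable $w=r_2 s$. Since this $G$-function has $n=0$ and equal orders $m=q=2k+3$, the transform returns $\eta^{-r_2 s}$ (with $\eta=\alpha_2\beta_2/(A_{02}h_{p2})$) times a ratio whose numerator carries $\Gamma(r_2 s+\alpha_2)\Gamma(r_2 s+\beta_2)$ together with $2k+1$ copies of $\Gamma(r_2 s+a)$, and whose denominator carries $2k+1$ copies of $\Gamma(r_2 s+a+1)$, where $a=(1+q_g^2)\varpi/(2q_g)$.

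The key simplification, and the step that produces the compact final form, is the contraction of these repeated factors. Since $\Gamma(r_2 s+a+1)=(r_2 s+a)\Gamma(r_2 s+a)$, each of the $2k+1$ pairs reduces to $1/(r_2 s+a)$, so the block collapses to $[r_2 s+(1+q_g^2)\varpi/(2q_g)]^{-(1+2k)}$ — exactly the denominator power in (\ref{MonmentsH2}). Collecting the $\eta^{-r_2 s}$ scaling with the $\overline{\gamma}_{H2}^{\,s}$ produced by the Jacobian yields the bracketed base $\bigl[A_{02}h_{p2}\overline{\gamma}_{H2}^{1/r_2}/(\alpha_2\beta_2)\bigr]^{r_2 s}$, while the $r_2$ from the change of variables cancels the $1/r_2$ prefactor in (\ref{PDFH2}), leaving the surviving factors $\Gamma(r_2 s+\alpha_2)\Gamma(r_2 s+\beta_2)$ and the stated sum over $k$.

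I expect the main obstacle to be the bookkeeping of this repeated-parameter contraction rather than any deep analytic difficulty: one must correctly count the $2k+1$ Gamma pairs and confirm that the unit shift between the upper and lower parameter lists is precisely what generates the clean $1/(r_2 s+a)$ factors. A secondary point is the strip of validity, as the transform requires the arguments of the surviving Gammas to have positive real part, i.e.\ $\Re(r_2 s+\alpha_2)>0$ and $\Re(r_2 s+\beta_2)>0$, which holds for the non-negative moment orders $s$ of interest, so (\ref{MonmentsH2}) is the valid evaluation on that strip.
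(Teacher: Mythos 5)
Your proposal is correct and follows essentially the same route as the paper: substitute the PDF \eqref{PDFH2} into the moment integral and evaluate the resulting Mellin transform of the Meijer-$G$ function (the content of (1.5) of \cite{Mittal}), with the repeated parameter pairs contracting via $\Gamma(z+1)=z\Gamma(z)$ to give the $\bigl[r_2 s+(1+q_g^2)\varpi/(2q_g)\bigr]^{-(1+2k)}$ factor. Your bookkeeping of the $2k+1$ Gamma pairs, the cancellation of $r_2$, and the resulting base $\bigl[A_{02}h_{p2}\overline{\gamma}_{H2}^{1/r_2}/(\alpha_2\beta_2)\bigr]^{r_2 s}$ all reproduce \eqref{MonmentsH2} exactly.
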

\section{END-TO-END SYSTEM PERFORMANCE}
\subsection{End-to-End SNR Statistics}
The end-to-end SNR for the fixed-gain relaying scheme can be derived using the expression provided in \cite[Eq.~(28)]{fixedgain}, under the assumption that the effects of saturation can be neglected, as follows
\begin{align}\label{E2ESNR}
\gamma=\frac{\gamma_{H1} \gamma_{H2}}{\gamma_{H2}+C},
\end{align}
where $C$ represents a constant relay gain.

\begin{theorem}
The CDF and PDF of the overall SNR can be derived in terms of the bivariate Fox-H function, also referred to  as \eqref{PDFgamma} and \eqref{CDFgamma}, also known as the Fox's H-function of two variables \cite{Mittal}, whose MATLAB implementation is provided in \cite{Peppas}.
\begin{proof}
See Appendix \ref{E}.
\end{proof}
\end{theorem}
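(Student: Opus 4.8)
The plan is to obtain the end-to-end CDF by conditioning on the second-hop SNR $\gamma_{H2}$ and then reducing the resulting coupled integral to a double Mellin--Barnes contour integral. Writing $u \equiv \gamma_{H2}$, the fixed-gain relation \eqref{E2ESNR} makes the event defining $F_\gamma$ equivalent, conditioned on $\gamma_{H2}=u$, to $\{\gamma_{H1}\le \gamma(u+C)/u\}$, so that
\[
F_\gamma(\gamma) = \int_0^\infty F_{\gamma_{H1}}\!\left(\frac{\gamma(u+C)}{u}\right) f_{\gamma_{H2}}(u)\, du.
\]
Substituting the first-hop CDF \eqref{CDFH}, which is a single Meijer-$G$ function, and the second-hop PDF \eqref{PDFH2}, which is a finite sum over $k$ of Meijer-$G$ functions, the additive constant ``$1$'' in \eqref{CDFH} integrates against $f_{\gamma_{H2}}$ to unity and contributes the leading term, leaving a finite sum of single integrals over $u$, each containing a product of two Meijer-$G$ functions whose arguments both depend on $u$.

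The crux of the derivation is the factor $(u+C)^{1/r_1}$ inside the argument of the first-hop Meijer-$G$, which couples the two hops nonlinearly and blocks a direct one-dimensional Mellin transform. To disentangle it, I would replace the first-hop Meijer-$G$ by its defining Mellin--Barnes integral over a complex variable $s_1$, whose kernel produces the factor $\bigl[\gamma(u+C)/u\bigr]^{-s_1/r_1}$, and then expand the binomial-type term $(u+C)^{-s_1/r_1}$ through a second Mellin--Barnes representation, using the identity $(1+x)^{-\rho} = \frac{1}{2\pi j\,\Gamma(\rho)}\int_{\mathcal{L}}\Gamma(\rho+s_2)\Gamma(-s_2)\,x^{s_2}\,ds_2$ with $x=C/u$. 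This introduces a second complex variable $s_2$ and isolates all dependence on $u$ in a single power $u^{\,s_1/r_1-s_2}$, which is exactly what is needed for the next step.

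With the $u$-dependence reduced to a pure power, I would interchange the order of integration, justified on suitably chosen vertical contours, and evaluate the inner integral over $u$ against the Meijer-$G$ of \eqref{PDFH2}. This inner integral is simply a Mellin transform of a Meijer-$G$ function and closes in terms of a ratio of Gamma functions by the standard formula. What survives is a double contour integral in $(s_1,s_2)$ whose integrand is a product and quotient of Gamma functions that are linear in $s_1$ and $s_2$; by the definition of \cite{Mittal} this is precisely a bivariate Fox-$H$ function, and summing over $k$ yields \eqref{CDFgamma}. The PDF \eqref{PDFgamma} follows in the same way from the companion representation $f_\gamma(\gamma) = \int_0^\infty f_{\gamma_{H1}}\!\bigl(\tfrac{\gamma(u+C)}{u}\bigr)\,\tfrac{u+C}{u}\,f_{\gamma_{H2}}(u)\,du$, or equivalently by differentiating the CDF. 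I expect the main obstacle to be the bookkeeping around the $(u+C)$ coupling: selecting contours for $s_1$ and $s_2$ so that both the double integral and the intermediate $u$-integral converge, rigorously justifying the Fubini interchange, and matching the final Gamma-function integrand term by term to the canonical parameter lists of the bivariate Fox-$H$ in \cite{Mittal}.
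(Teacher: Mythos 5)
Your proposal is correct and follows the same overall strategy as the paper's Appendix~\ref{E}: both start from the conditioning integral $F_{\gamma}(\gamma)=\int_{0}^{\infty}F_{\gamma_{H1}}\!\left(\gamma(1+C/x)\right)f_{\gamma_{H2}}(x)\,\mathrm{d}x$, reduce it to a double Mellin--Barnes integral, and identify the result with the bivariate Fox-$H$ function via (1.1) of \cite{Mittal}; your handling of the additive ``$1$'' and of the PDF (differentiating the CDF, or the companion density integral) also matches the paper. The one genuine difference is mechanical: the paper opens \emph{both} Meijer-$G$ functions as contour integrals (variables $t$ and $s$) and disposes of the leftover elementary integral $\int_{0}^{\infty}x^{s/r_2-1}(1+C/x)^{t/r_1}\,\mathrm{d}x$ with the Beta-function formula \cite[Eq.~(3.251/11)]{intetable}, which is what produces the coupling kernel $\Gamma(s/r_2-t/r_1)\,\Gamma(-s/r_2)/\Gamma(1-t/r_1)$ appearing in \eqref{E4}; you instead open only the first-hop Meijer-$G$, generate the second contour variable from the Mellin--Barnes representation of $(1+C/u)^{-s_1/r_1}$, and then evaluate the $u$-integral as a Mellin transform of the second-hop PDF. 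Under the identification $s_2=s/r_2$, $s_1=-t$, your binomial kernel $\Gamma(s_1/r_1+s_2)\Gamma(-s_2)/\Gamma(s_1/r_1)$ together with the Mellin moments of $\gamma_{H2}$ (compare \eqref{MonmentsH2}) reproduces exactly the paper's integrand, so the two routes are equivalent and lead to the same \eqref{CDFgamma} and \eqref{PDFgamma}. Neither route fully settles the contour-selection and Fubini justification --- the paper is silent on it and you explicitly flag it as the remaining obstacle --- so nothing essential is missing relative to the published argument.
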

\subsection{Performance Analysis}
\subsubsection{Outage Probability}
The OP represents the probability that the end-to-end SNR falls below a specified threshold \(\gamma_{\rm{th}}\). By substituting \(\gamma\) with \(\gamma_{\rm{th}}\) in (\ref{CDFgamma}), a unified expression for the OP in both detection methods can be directly obtained.
\begin{figure*}
\begin{equation}
\label{PDFgamma}
\scalebox{1}{$
\begin{aligned}
f_{\gamma}(\gamma) =\,& \frac{\eta_s^2 \varpi \mathcal{N}}{   \Gamma(\alpha_1) \Gamma(\beta_1)   \Gamma(\alpha_2) \Gamma(\beta_2)\gamma} \sum_{k=0}^{N_k} \frac{\Gamma(1 + 2k)}{k! \Gamma(1 + k)}\left[ {(1 - q_g^2)\varpi}/{(4 q_g)} \right]^{2k}
\\& \times{\rm H}^{0,1;3,0;2k+4,0}_{1,0;2,3;2k+1,2k+4}\left[\!\!\!\text{\small$\begin{array}{c}\left( 1; -1, 1 \right) \\-\\
\left( 1 + \eta_s^2  , r_1 \right) \left( 0, 1 \right) \\
\left( \eta_s^2  , r_1 \right) (\alpha_1, r_1) (\beta_1, r_1) \\{\left\{\left({(1 + q_g^2) \varpi}/{(2 q_g)}+1,r_2\right)\right\}}_{2k+1}\\(\alpha_2, r_2) (\beta_2, r_2)(0,1){\left\{\left({(1 + q_g^2) \varpi}/{(2 q_g)} ,r_2\right)\right\}}_{2k+1}\end{array}$}\!\!\!\middle|\!\!\!\text{\small$\begin{array}{c} \left(\frac{\alpha_1 \beta_1}{A_{01} h_{p1}}\right) ^{r_1}\frac{\gamma}{\overline{\gamma}_{H1}}\\\\
 \left(\frac{\alpha_2 \beta_2}{A_{02} h_{p2}}\right) ^{r_2}\frac{C}{\overline{\gamma}_{H2}} \end{array}$}
\!\!\!\right]\!\!,
\end{aligned}$}
\end{equation}
\begin{equation}
\label{CDFgamma}\scalebox{1}{$
\begin{aligned}
F_{\gamma}(\gamma) =\,& 1 - \frac{\eta_s^2 \varpi \mathcal{N}}{   \Gamma(\alpha_1) \Gamma(\beta_1)   \Gamma(\alpha_2) \Gamma(\beta_2)} \sum_{k=0}^{N_k} \frac{\Gamma(1 + 2k)}{k! \Gamma(1 + k)}\left[ {(1 - q_g^2)\varpi}/{(4 q_g)} \right]^{2k}\\& \times
{\rm H}^{0,1;3,0;2k+4,0}_{1,0;2,3;2k+1,2k+4}\left[\!\!\!\text{\small$\begin{array}{c}\left( 1; -1, 1 \right) \\-\\
\left( 1 + \eta_s^2  , r_1 \right) \left( 1, 1 \right) \\
\left( \eta_s^2  , r_1 \right) (\alpha_1, r_1) (\beta_1, r_1) \\{\left\{\left({(1 + q_g^2) \varpi}/{(2 q_g)}+1,r_2\right)\right\}}_{2k+1}\\(\alpha_2, r_2) (\beta_2, r_2)(0,1){\left\{\left(\frac{(1 + q_g^2) \varpi}{2 q_g} ,r_2\right)\right\}}_{2k+1}\end{array}$}\!\!\!\middle|\!\!\!\text{\small$\begin{array}{c}\left(\frac{\alpha_1 \beta_1}{A_{01} h_{p1}}\right) ^{r_1}\frac{\gamma}{\overline{\gamma}_{H1}}\\\\
 \left(\frac{\alpha_2 \beta_2}{A_{02} h_{p2}}\right) ^{r_2}\frac{C}{\overline{\gamma}_{H2}}\end{array}$}
\!\!\!\right]\!\!,
\end{aligned}$}
\end{equation}
\hrule
\end{figure*}
In (\ref{CDFgamma}), the CDF is represented using the bivariate Fox-H function, which is complex and not readily available in common mathematical software such as MATLAB or MATHEMATICA. We analyze the CDF in the high SNR regime using an asymptotic approach to overcome this limitation. This results in a simplified version of the CDF in (\ref{CDFgamma}), which only involves elementary functions that are already supported in MATLAB and MATHEMATICA, as
shown in (\ref{CDFgammaA}).

\begin{figure*}
\begin{equation}
\scalebox{1}{$\begin{aligned}
\label{CDFgammaA}
&F_{\gamma}(\gamma)\underset{\scalebox{0.8}{$\overline{\gamma}_{H1}\gg 1, \overline{\gamma}_{H2}\gg 1
$}}{\mathop{\approx}}\frac{\eta_s^2 \varpi \mathcal{N}}{   \Gamma(\alpha_1) \Gamma(\beta_1)   \Gamma(\alpha_2) \Gamma(\beta_2)} \sum_{k=0}^{N_k} \frac{\Gamma(1 + 2k)}{k! \Gamma(1 + k)} \left[ \frac{(1 - q_g^2)\varpi}{4 q_g} \right]^{2k}  
\left\{\frac{  \Gamma(\alpha_1 - \eta_s^2  ) \Gamma(\beta_1 - \eta_s^2  ) \Gamma(\alpha_2) \Gamma(\beta_2)}
{\eta_s^2   \left( \frac{(1 + q_g^2) \varpi}{2 q_g} \right)^{2k+1}}\right.\\&\times\left[ \left( \frac{\alpha_1 \beta_1}{A_{01} h_{p_1}} \right)^{r_1}  \frac{\gamma}{\overline{\gamma}_{H_1}}  \right]^{\frac{\eta_s^2  }{r_1}} + \frac{  \Gamma(\alpha_1 \beta_1) \Gamma(\alpha_2) \Gamma(\beta_2)}
{\beta_1 (\eta_s^2   - \beta_1) \left( \frac{(1 + q_g^2) \varpi}{2 q_g} \right)^{2k+1}} 
\left[\left( \frac{\alpha_1 \beta_1}{A_{01} h_{p_1}} \right)^{r_1}  \frac{\gamma}{\overline{\gamma}_{H_1}}  \right]^{\frac{\beta_1}{r_1}}\!\!\!
 \\&+ \frac{  \Gamma(\eta_s^2   - \beta_1) \Gamma(\alpha_1 - \beta_1) 
\Gamma\left(\alpha_2 - \frac{r_2 \beta_1}{r_1}\right) 
\Gamma\left(\beta_2 - \frac{r_2 \beta_1}{r_1}\right)}
{\beta_1 (\eta_s^2   - \beta_1) \left[ \left( \frac{1 + q_g^2}{2 q_g} \right) \varpi - \frac{r_2 \beta_1}{r_1} \right]^{2k+1}} \left[\left( \frac{\alpha_1 \beta_1}{A_{01} h_{p_1}} \right)^{r_1}\left( \frac{\alpha_2 \beta_2}{A_{02} h_{p_2}} \right)^{r_2} \frac{ C}{\overline{\gamma}_{H_1} \,\overline{\gamma}_{H_2}} \right]^{\frac{\beta_1}{r_1}}+  \Gamma\left(\alpha_1 - \frac{r_1}{r_2} \frac{(1 + q_g^2) \varpi}{2 q_g}\right)  \\&\times(2k + 1)\frac{ 
\Gamma\left(\beta_1 - \frac{r_1}{r_2} \frac{(1 + q_g^2) \varpi}{2 q_g}\right) 
\Gamma\left(\alpha_2 - \frac{(1 + q_g^2) \varpi}{2 q_g}\right) 
\Gamma\left(\beta_2 - \frac{(1 + q_g^2) \varpi}{2 q_g}\right)}
{\left(\eta_s^2   - \frac{r_1}{r_2} \frac{(1 + q_g^2) \varpi}{2 q_g}\right) \left(\frac{(1 + q_g^2) \varpi}{2 q_g}\right)}  \left[\left( \frac{\alpha_1 \beta_1}{A_{01} h_{p_1}} \right)^{r_1}\left( \frac{\alpha_2 \beta_2}{A_{02} h_{p_2}} \right)^{r_2} \frac{ C}{\overline{\gamma}_{H_1} \,\overline{\gamma}_{H_2}}\right]^{\frac{(1 + q_g^2) \varpi}{2 q_g r_2}}\\&+ \frac{  \Gamma\left(\eta_s^2   - \frac{r_1}{r_2} \alpha_2\right) 
\Gamma\left(\alpha_1 - \frac{r_1}{r_2} \alpha_2\right) 
\Gamma\left(\beta_1 - \frac{r_1}{r_2} \alpha_2\right) 
\Gamma(\beta_2 - \alpha_2)}
{\alpha_2 \left(\eta_s^2   - \alpha_2\right) \left[\left(\frac{1 + q_g^2}{2 q_g}\right) \varpi - \alpha_2\right]^{2k+1}}  \left[ \left( \frac{\alpha_1 \beta_1}{A_{01} h_{p_1}} \right)^{r_1}\left( \frac{\alpha_2 \beta_2}{A_{02} h_{p_2}} \right)^{r_2} \frac{C}{\overline{\gamma}_{H_1} \,\overline{\gamma}_{H_2}}  \right]^{\frac{\alpha_2}{r_2}}\\&+\left. \frac{  \Gamma\left(\eta_s^2   - \frac{r_1}{r_2} \beta_2\right) 
\Gamma\left(\alpha_1 - \frac{r_1}{r_2} \beta_2\right) 
\Gamma\left(\beta_1 - \frac{r_1}{r_2} \beta_2\right) 
\Gamma(\alpha_2 - \beta_2)}
{\beta_2 \left(\eta_s^2   - \beta_2\right) \left[\left(\frac{1 + q_g^2}{2 q_g}\right) \varpi - \beta_2\right]^{2k+1}}  \left[\left( \frac{\alpha_1 \beta_1}{A_{01} h_{p_1}} \right)^{r_1}\left( \frac{\alpha_2 \beta_2}{A_{02} h_{p_2}} \right)^{r_2} \frac{ C}{\overline{\gamma}_{H_1} \,\overline{\gamma}_{H_2}}  \right]^{\frac{\beta_2}{r_2}}\right\} 
\end{aligned}$}
\end{equation}
\hrule
\end{figure*}

\begin{proof}
See Appendix \ref{F}.
\end{proof}

This asymptotic expression proves particularly useful for determining the system's diversity order. Specifically, the diversity gain of our system can be calculated as
\begin{align}
\label{diversity}
\mathcal{G}_{d} = \min \left(\frac{\alpha_1}{r_1}, \frac{\beta_1}{r_1}, \frac{\eta_{S}^{2}}{r_1},\frac{\alpha_2}{r_2} ,\frac{\beta_2}{r_2}, \frac{(1 + q_g^2) \varpi}{2 q_g r_2} \right).
\end{align}

\subsubsection{Average Bit-Error Rate}
\begin{corollary}
The expression for the average BER applicable to various coherent M-QAM and M-PSK modulation schemes is given by \cite[Eq.~(21)]{dualhopFSO}
\begin{align}\label{ABERgamma}
\overline{{P}}_{e}=\delta_{B} \sum_{m=1}^{N_{B}} I\left(p_{B}, q_{B m}\right),
\end{align}
where the expression of $I\left(p_{B}, q_{B m}\right)$ is given in \eqref{Igamma}. To overcome the challenges posed by the bivariate Fox-H function, an asymptotic result of (\ref{Igamma}) is given as  (\ref{IgammaA}).
\begin{figure*}
\vspace{-0.5cm}
\begin{equation}
\label{Igamma}
\scalebox{1}{$\begin{aligned}
I\left(p_{B}, q_{B m}\right) =\,& \frac{1}{2} - \frac{\eta_s^2 \varpi \mathcal{N}}{2 \Gamma(p_B)\Gamma(\alpha_1) \Gamma(\beta_1)   \Gamma(\alpha_2) \Gamma(\beta_2)} \sum_{k=0}^{N_k} \frac{\Gamma(1 + 2k)}{k! \Gamma(1 + k)} \left[ {(1 - q_g^2)\varpi}/{(4 q_g)} \right]^{2k}
\\&\times{\rm H}^{0,1;3,1;2k+4,0}_{1,0;3,3;2k+1,2k+4}\left[\!\!\!\!\text{\small$\begin{array}{c}\left( 1; -1, 1 \right) \\-\\(1-p_B,1)
\left( 1 + \eta_s^2  , r_1 \right) \left( 1, 1 \right) \\
\left( \eta_s^2  , r_1 \right) (\alpha_1, r_1) (\beta_1, r_1) \\{\left\{\left({(1 + q_g^2) \varpi}/{(2 q_g)}+1,r_2\right)\right\}}_{2k+1}\\(\alpha_2, r_2) (\beta_2, r_2)(0,1){\left\{\left(\frac{(1 + q_g^2) \varpi}{2 q_g} ,r_2\right)\right\}}_{2k+1}\end{array}$}\!\!\middle|\!\!\!\text{\small$\begin{array}{c}\left(\frac{\alpha_1 \beta_1}{A_{01} h_{p1}}\right) ^{r_1}\frac{1}{\overline{\gamma}_{H1}q_{Bm}}\\\\
\left(\frac{\alpha_2 \beta_2}{A_{02} h_{p2}}\right) ^{r_2}\frac{ C}{\overline{\gamma}_{H2}}\end{array}$}
\!\!\!\right]\!  ,
\end{aligned}$}
\end{equation}
\hrule
{\begin{equation}
\label{IgammaA}
\scalebox{1}{$\begin{aligned}
&I(p_B, q_{Bm})\underset{\scalebox{0.8}{$\begin{array}{c}
\overline{\gamma}_{H1}\gg 1,\\\overline{\gamma}_{H2}\gg 1
\end{array}
$}}{\mathop{\approx}}\frac{\eta_s^2 \varpi \mathcal{N}}{2 \Gamma(\alpha_1) \Gamma(\beta_1)   \Gamma(\alpha_2) \Gamma(\beta_2) \Gamma(p_B)}\sum_{k=0}^{N_k} \frac{\Gamma(1 + 2k)}{k! \Gamma(1 + k)} \left[ \frac{(1 - q_g^2)\varpi}{4 q_g} \right]^{2k}  \!\!\left\{\frac{\Gamma(\alpha_1 - \eta_s^2  ) \Gamma(\beta_1 - \eta_s^2  ) \Gamma(\alpha_2) \Gamma(\beta_2) }
{\eta_s^2   \left[\left(\frac{1 + q_g^2}{2 q_g}\right) \varpi\right]^{2k+1} } \right. \\& \times \Gamma\left(p_B + \frac{\eta_s^2  }{r_1}\right)\left[\left( \frac{\alpha_1 \beta_1}{A_{01} h_{p_1}} \right)^{r_1}\frac{1}{{q_{Bm}\overline{\gamma}_{H_1}}}\right]^{\eta_s^2} + \frac{ \Gamma(\beta_1 - \alpha_1) \Gamma(\alpha_2) \Gamma(\beta_2) \Gamma\left(p_B + \frac{\alpha_1}{r_1}\right)}
{\alpha_1 (\eta_s^2   - \alpha_1) \left[\left(\frac{1 + q_g^2}{2 q_g}\right) \varpi\right]^{2k+1} } \left[ \!{\left( \frac{\alpha_1 \beta_1}{A_{01} h_{p_1}} \right)^{r_1}} \frac{1}{{q_{Bm}\overline{\gamma}_{H_1}}}\right]^{\frac{\alpha_1}{r_1}} \\&+ 
\frac{ \Gamma(\alpha_1 -\beta_1) \Gamma(\alpha_2) \Gamma(\beta_2) \Gamma\left(p_B + \frac{\beta_1}{r_1}\right)}
{\beta_1 (\eta_s^2   - \beta_1) \left[\left(\frac{1 + q_g^2}{2 q_g}\right) \varpi\right]^{2k+1} } \left[ \left( \frac{\alpha_1 \beta_1}{A_{01} h_{p_1}} \right)^{r_1}\frac{1}{{q_{Bm}\overline{\gamma}_{H_1}}}\right]^{\frac{\beta_1}{r_1}} +\left[\left( \frac{\alpha_1 \beta_1}{A_{01} h_{p_1}} \right)^{r_1}\left( \frac{\alpha_2 \beta_2}{A_{02} h_{p_2}} \right)^{r_2}\frac{ C}{\overline{\gamma}_{H_1} \overline{\gamma}_{H_2}q_{Bm}}  \right]^{\frac{(1 + q_g^2) \varpi}{2 q_g r_2}} \\&\times (2k + 1)\frac{  \Gamma\left(\alpha_1 - \frac{r_1}{r_2} \frac{(1 + q_g^2) \varpi}{2 q_g}\right) 
\Gamma\left(\beta_1 - \frac{r_1}{r_2} \frac{(1 + q_g^2) \varpi}{2 q_g}\right) 
\Gamma\!\left(\alpha_2 - \frac{(1 + q_g^2) \varpi}{2 q_g}\right) 
\Gamma\left(\beta_2 - \frac{(1 + q_g^2) \varpi}{2 q_g}\right) \Gamma\left(p_B + \frac{(1 + q_g^2) \varpi}{2 q_g r_2}\right)
}
{\left(\eta_s^2   - \frac{r_1}{r_2} \frac{(1 + q_g^2) \varpi}{2 q_g}\right) 
\left(\frac{(1 + q_g^2) \varpi}{2 q_g}\right) }\\&  + \frac{ \Gamma\left(\eta_s^2   - \frac{r_1}{r_2} \alpha_2\right) 
\Gamma\left(\alpha_1 - \frac{r_1}{r_2} \alpha_2\right) 
\Gamma\left(\beta_1 - \frac{r_1}{r_2} \alpha_2\right) 
\Gamma(\beta_2 - \alpha_2) \Gamma\left(p_B + \frac{\alpha_2}{r_2}\right)}
{\alpha_2 \left(\eta_s^2   - \alpha_2\right) 
\left[\left(\frac{1 + q_g^2}{2 q_g}\right) \varpi - \alpha_2\right]^{2k+1} }  \left[ \left( \frac{\alpha_1 \beta_1}{A_{01} h_{p_1}} \right)^{r_1}\left( \frac{\alpha_2 \beta_2}{A_{02} h_{p_2}} \right)^{r_2} \frac{C}{\overline{\gamma}_{H_1} \,\overline{\gamma}_{H_2}q_{Bm}}\right]^{\frac{\alpha_2}{r_2}}\\&+\left. \frac{ \Gamma\left(\eta_s^2   - \frac{r_1}{r_2} \beta_2\right) 
\Gamma\left(\alpha_1 - \frac{r_1}{r_2} \beta_2\right) 
\Gamma\left(\beta_1 - \frac{r_1}{r_2} \beta_2\right) 
\Gamma(\alpha_2 - \beta_2) 
\Gamma\left(p_B + \frac{\beta_2}{r_2}\right)}
{\beta_2 (\eta_s^2   - \beta_2) \left[\left(\frac{1 + q_g^2}{2 q_g}\right) \varpi - \beta_2\right]^{2k+1} } \left[ {\left( \frac{\alpha_1 \beta_1}{A_{01} h_{p_1}} \right)^{r_1}\left( \frac{\alpha_2 \beta_2}{A_{02} h_{p_2}} \right)^{r_2} \frac{C}{q_{Bm}\overline{\gamma}_{H_1} \,\overline{\gamma}_{H_2}}} \right]^{\frac{\beta_2}{r_2}}\right\}  
\end{aligned}$}
\end{equation}}
\end{figure*}
\begin{proof}
See Appendix \ref{G}.
\end{proof}
\end{corollary}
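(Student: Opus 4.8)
The plan is to prove the closed form (\ref{Igamma}) first and then recover (\ref{IgammaA}) as its high-SNR limit, following the same route already used for the HAP--User link in the derivation of (\ref{ABERIH2}). By analogy with the single-link kernel (\ref{DI2}), $I(p_B,q_{Bm})$ is the weighted Laplace transform of the end-to-end CDF,
\begin{equation*}
I(p_B,q_{Bm})=\frac{q_{Bm}^{p_B}}{2\Gamma(p_B)}\int_0^\infty x^{p_B-1}e^{-q_{Bm}x}\,F_\gamma(x)\,dx ,
\end{equation*}
so the first step is simply to insert the end-to-end CDF (\ref{CDFgamma}) into this integral. The additive ``$1$'' in (\ref{CDFgamma}) integrates to $\tfrac{q_{Bm}^{p_B}}{2\Gamma(p_B)}\Gamma(p_B)q_{Bm}^{-p_B}=\tfrac12$, which yields the leading $\tfrac12$ in (\ref{Igamma}); all remaining effort goes into the Laplace transform of the bivariate Fox-H term.

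For that term I would represent the bivariate Fox-H of (\ref{CDFgamma}) by its defining double Mellin--Barnes contour integral, interchange the $x$-integration with the two Bromwich contours (legitimate by absolute convergence on the chosen paths), and perform the now-elementary inner integral. Because the first argument of the Fox-H is linear in $\gamma$, the inner integrand carries a factor $\gamma^{-s_1}$ in the contour variable $s_1$ conjugate to that argument, so the inner integral collapses to $\int_0^\infty x^{p_B-1-s_1}e^{-q_{Bm}x}\,dx=\Gamma(p_B-s_1)\,q_{Bm}^{-(p_B-s_1)}$. The new Gamma factor $\Gamma(p_B-s_1)$ is exactly the additional top parameter $(1-p_B,1)$, which lifts the Fox-H indices from ${\rm H}^{0,1;3,0;2k+4,0}_{1,0;2,3;2k+1,2k+4}$ to ${\rm H}^{0,1;3,1;2k+4,0}_{1,0;3,3;2k+1,2k+4}$, while the accompanying $q_{Bm}^{s_1}$ rescales the first argument from $(\alpha_1\beta_1/(A_{01}h_{p1}))^{r_1}\gamma/\overline{\gamma}_{H1}$ to $(\alpha_1\beta_1/(A_{01}h_{p1}))^{r_1}/(\overline{\gamma}_{H1}q_{Bm})$, and the surviving $q_{Bm}^{-p_B}$ cancels against the prefactor. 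Collecting these pieces and summing over $k$ reproduces (\ref{Igamma}).

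To obtain (\ref{IgammaA}) I would pass to the regime $\overline{\gamma}_{H1},\overline{\gamma}_{H2}\gg1$ by substituting the already-established asymptotic CDF (\ref{CDFgammaA}) --- a finite sum of power terms in $\gamma$ with dominant exponents $\eta_s^2/r_1,\ \alpha_1/r_1,\ \beta_1/r_1,\ (1+q_g^2)\varpi/(2q_g r_2),\ \alpha_2/r_2$ and $\beta_2/r_2$ --- into the same integral and integrating term by term. In this regime (\ref{CDFgammaA}) approximates $F_\gamma$ over the entire effective support of the weight $x^{p_B-1}e^{-q_{Bm}x}$, so substituting it in place of $F_\gamma$ and using $\int_0^\infty x^{p_B-1+a}e^{-q_{Bm}x}\,dx=\Gamma(p_B+a)q_{Bm}^{-(p_B+a)}$ sends each monomial $\gamma^a$ to one of the six terms of (\ref{IgammaA}): the coefficient is multiplied by $\Gamma(p_B+a)$ and $\gamma$ is replaced by $1/q_{Bm}$, with no surviving constant. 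Equivalently, one may close the two contours in (\ref{Igamma}) and retain only the residues at the poles nearest the contours, controlled by $\Gamma(\eta_s^2-\cdot),\Gamma(\alpha_1-\cdot),\Gamma(\beta_1-\cdot),\Gamma(\alpha_2-\cdot),\Gamma(\beta_2-\cdot)$ and the repeated $\Gamma((1+q_g^2)\varpi/(2q_g)-\cdot)$ factor.

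The main obstacle is the convergence and pole bookkeeping for the \emph{double} Mellin--Barnes integral. Unlike a single Fox-H, the two contours are coupled, so I must verify that the $x$-integral may be moved inside both contours and, more delicately, identify which poles dominate jointly as both average SNRs grow. The crucial separation is between the first-hop-limited contributions (powers of $\gamma/\overline{\gamma}_{H1}$) and the relay/second-hop-limited contributions (powers of $C/(\overline{\gamma}_{H1}\overline{\gamma}_{H2})$); getting this split right is precisely what fixes the six leading exponents and, through their minimum, the diversity order in (\ref{diversity}). Everything else reduces to the routine Gamma-function algebra already carried out for (\ref{ABERIH2}).
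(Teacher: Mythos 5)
Your proposal is correct and follows essentially the same route as the paper's Appendix G: substitute the double Mellin--Barnes representation of $F_\gamma$ (the paper's (\ref{E4})) into the Laplace-type integral (\ref{G1}), evaluate the inner integral via the Gamma-function definition to pick up the extra parameter pair $(1-p_B,1)$ and the $q_{Bm}$ rescaling, and repackage via (1.1) of \cite{Mittal}; the asymptotic form is likewise obtained by inserting (\ref{CDFgammaA}) and integrating term by term. The only cosmetic difference is your sign convention on the contour variable and the optional residue-based rederivation, neither of which changes the argument.
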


\begin{figure*}
\begin{equation}
\label{Capacitygamma}
\begin{aligned}
&\overline{C} =  \frac{\eta_s^2 \varpi \mathcal{N}}{   \Gamma(\alpha_1) \Gamma(\beta_1)   \Gamma(\alpha_2) \Gamma(\beta_2)} \sum_{k=0}^{N_k} \frac{\Gamma(1 + 2k)}{k! \Gamma(1 + k)}\left[ {(1 - q_g^2) \varpi}/{(4 q_g)} \right]^{2k}\\& \times{\rm H}^{0,1;4,1;2k+4,0}_{1,0;3,4;2k+1,2k+4}\left[\!\!\!\!\begin{array}{c}\left( 1; -1, 1 \right) \\-\\
(0,1)\left( 1 + \eta_s^2  , r_1 \right) \left( 1, 1 \right) \\
\left( \eta_s^2  , r_1 \right) (\alpha_1, r_1) (\beta_1, r_1)(0,1) \\{\left\{\left({(1 + q_g^2) \varpi}/{(2 q_g)}+1,r_2\right)\right\}}_{2k+1}\\(\alpha_2, r_2) (\beta_2, r_2)(0,1){\left\{\!\!\left(\frac{(1 + q_g^2) \varpi}{2 q_g} ,r_2\right)\!\!\right\}}_{2k+1}\end{array}\!\!\!\middle|\!\!\!\begin{array}{c}\left(\frac{\alpha_1 \beta_1}{A_{01} h_{p1}}\right) ^{r_1}\frac{1}{\overline{\gamma}_{H1}c_0}\\\\
 \left(\frac{\alpha_2 \beta_2}{A_{02} h_{p2}}\right) ^{r_2}\frac{C}{\overline{\gamma}_{H2}}\end{array}
\!\!\!\!\right]\!\!   
\end{aligned}
\end{equation}
\hrule
\end{figure*}
\subsubsection{Ergodic Capacity}
\begin{corollary}
The ergodic capacity of the end-to-end system can be expressed as  \eqref{Capacitygamma}.

\end{corollary}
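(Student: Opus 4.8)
The plan is to begin from the information-theoretic definition of the ergodic capacity of the end-to-end link, which is the natural analogue of \eqref{DefC2} obtained by replacing the single-hop density $f_{\gamma_{H2}}$ with the end-to-end PDF $f_{\gamma}$ of \eqref{PDFgamma}, namely $\overline{C} = \int_0^\infty \ln(1 + c_0\gamma)\, f_{\gamma}(\gamma)\, d\gamma$. The first move is to represent the logarithmic kernel as a Meijer-$G$ function,
\begin{equation}
\ln(1 + c_0\gamma) = {\rm G}_{2,2}^{1,2}\!\left[ c_0\gamma \,\middle|\, \begin{array}{c} 1,1 \\ 1,0 \end{array} \right],
\end{equation}
whose Mellin--Barnes form contributes the factor $\Gamma(1-\xi)\,\Gamma(\xi)^2/\Gamma(1+\xi)\,(c_0\gamma)^{\xi}$ along a vertical contour in $\xi$. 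This is exactly the device already used for the single-link capacity \eqref{C2} in Appendix C; the only structural difference here is that $f_{\gamma}$ is a \emph{bivariate} Mellin--Barnes object rather than a univariate one.

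Next I would substitute the bivariate Fox-$H$ form of $f_{\gamma}$ from \eqref{PDFgamma}, in which the $\gamma$-dependence enters only through the first argument $z_1 \propto \gamma/\overline{\gamma}_{H1}$ together with the explicit $1/\gamma$ prefactor, so that the inner kernel carries a factor $\gamma^{s}$ in the first contour variable $s$. After interchanging the order of the $\gamma$-integration with the two Fox-$H$ contour integrals and the logarithm's contour integral — an exchange justified by absolute convergence along the Bromwich paths — the elementary integral over $\gamma$ reduces to a Mellin-type integral in the combined variables $s+\xi$. Carrying it out is precisely the Mellin convolution of a univariate $G$-function against the kernel of a bivariate Fox-$H$, so I would invoke the corresponding integration identity for the Fox-$H$ function of two variables from \cite{Mittal}. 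Since the $\gamma$-dependence lives entirely in the first-variable (OGS--HAP) block, this step augments only that block: the upper parameters $\{1,1\}$ and lower parameters $\{1,0\}$ of the logarithm's $G$-function merge into it, while the common block and the second-variable (OIRS) block are left untouched.

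Reassembling the resulting triple Mellin--Barnes integral as a single bivariate Fox-$H$, the orders of the first-variable block rise from $(m_1,n_1;p_1,q_1)=(3,0;2,3)$ in \eqref{PDFgamma} to $(4,1;3,4)$, which is exactly the index signature appearing in \eqref{Capacitygamma}; matching the new Gamma factors against the parameter rows then gives the stated expression. The main obstacle I anticipate lies in this final step: one must verify that the augmented Bromwich contours still separate the poles of the newly introduced Gamma factors from those already present, so that the defining integral of the bivariate Fox-$H$ converges and each pole is correctly classified, and that the triple interchange of integrations is legitimate throughout the relevant strip. The delicate, error-prone part is the parameter arithmetic itself — deciding which Gamma enters the top versus the bottom list of the first-variable block and in which position — a task identical in spirit to the BER derivation \eqref{Igamma} of Appendix G, but here complicated by the fact that the logarithm supplies one extra lower Gamma relative to the exponential kernel, which is what drives the shifts $m_1:3\mapsto4$ and $q_1:3\mapsto4$.
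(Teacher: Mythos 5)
Your proposal is correct and follows essentially the same route as the paper's Appendix~H: substitute the Mellin--Barnes (bivariate Fox-$H$) representation of $f_\gamma$ from \eqref{PDFgamma}/(\ref{E5}) into $\overline{C}=\int_0^\infty\ln(1+c_0\,\gamma)f_\gamma(\gamma)\,\mathrm{d}\gamma$, evaluate the $\gamma$-integral inside the contour integrals, and reassemble the result via (1.1) of \cite{Mittal} into \eqref{Capacitygamma}. The only cosmetic difference is that the paper computes the $\gamma$-integral directly using $\int_0^{\infty} x^{\mu-1}\ln(1+\gamma x)\,\mathrm{d}x=\pi/(\mu\gamma^{\mu}\sin\mu\pi)$ together with the reflection formula $\Gamma(z)\Gamma(1-z)=\pi/\sin\pi z$, whereas you obtain the same ratio of Gamma factors by representing $\ln(1+c_0\gamma)$ as a Meijer-$G$ function and performing a Mellin convolution.
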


\begin{proof}
See Appendix \ref{H}.
\end{proof}

\subsubsection{$s$-th moments}

{ The {$s$-th moment} of  $\gamma$ is defined as
\begin{equation}
\mathbb{E}[\gamma^s] = \int_0^\infty \gamma^s f_{\gamma}(\gamma)\, d\gamma,
\end{equation}}
\begin{corollary}
The $s$-th moments of $\gamma$ can be demonstrated as
\begin{figure*}
\begin{equation}
\label{moment}
\scalebox{1}{$\begin{aligned}
E(\gamma^s) = \,&\frac{\eta_s^2 \varpi \mathcal{N}}{\Gamma(\alpha_1) \Gamma(\beta_1) \Gamma(\alpha_2) \Gamma(\beta_2)} \sum_{k=0}^{N_k} \frac{\Gamma(1 + 2k)}{k! \, \Gamma(1 + k)} \left( \frac{(1 - q_g^2) \varpi}{4 q_g} \right)^{2k} \frac{\Gamma( \eta_s^2   + r_1s) \Gamma( \alpha_1 + r_1s) \Gamma( \beta_1 + r_1s)}{\Gamma(1 +  \eta_s^2   + r_1s) \Gamma(s)} \\&\times\left[ \left( \frac{\alpha_1 \beta_1}{A_{01} h_{p1}} \right)^{r_1} \frac{1}{ \overline{\gamma}_{H1} }\right]^{-s}{\rm H}_{2k+2,2k+4}^{2k+4,1}  \left[ \!\!\!\!
\begin{array}{c}
(1 - s, 1) \left( 1 + \frac{(1 + q_g^2) \varpi}{2 q_g}, r_2 \right)_{2k+1} \\
(\alpha_2, r_2) (\beta_2, r_2) (0, 1) \left( \frac{(1 + q_g^2) \varpi}{2 q_g}, r_2 \right)_{2k+1} 
\end{array}
\!\!\!\middle|\!  \left( \frac{\alpha_2 \beta_2}{A_{02} h_{p2}} \right)^{r_2}\frac{C}{\overline{\gamma}_{H2} }\right].
\end{aligned}$}
\end{equation}
\hrule
\end{figure*}
\end{corollary}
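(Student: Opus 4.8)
The plan is to evaluate $\mathbb{E}[\gamma^s]=\int_0^\infty \gamma^s f_\gamma(\gamma)\,d\gamma$ by inserting the closed-form density \eqref{PDFgamma} and integrating out $\gamma$ against the Mellin--Barnes representation of the bivariate Fox-H function. First I would write the bivariate Fox-H in \eqref{PDFgamma} as its defining double contour integral
\[
\frac{1}{(2\pi i)^2}\int_{L_1}\!\int_{L_2}\Phi_0(\sigma,\tau)\,\Phi_1(\sigma)\,\Phi_2(\tau)\,x^{-\sigma}y^{-\tau}\,d\sigma\,d\tau,
\]
where $x=\big(\tfrac{\alpha_1\beta_1}{A_{01}h_{p1}}\big)^{r_1}\tfrac{\gamma}{\overline{\gamma}_{H1}}$ carries the \emph{entire} $\gamma$-dependence, $y=\big(\tfrac{\alpha_2\beta_2}{A_{02}h_{p2}}\big)^{r_2}\tfrac{C}{\overline{\gamma}_{H2}}$ is constant in $\gamma$, $\Phi_1(\sigma)$ collects the first-variable gamma factors, $\Phi_2(\tau)$ the second-variable ones, and $\Phi_0(\sigma,\tau)$ is the coupling factor generated by the $(1;-1,1)$ entry. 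Reading off the first-variable block ($m_1,n_1=3,0$; $p_1,q_1=2,3$) gives
\[
\Phi_1(\sigma)=\frac{\Gamma(\eta_s^2+r_1\sigma)\Gamma(\alpha_1+r_1\sigma)\Gamma(\beta_1+r_1\sigma)}{\Gamma(1+\eta_s^2+r_1\sigma)\,\Gamma(\sigma)},
\]
the $\Gamma(\sigma)$ in the denominator arising precisely from the $(0,1)$ upper parameter.

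After interchanging the order of integration (Fubini), the $\gamma$-integral reduces to the elementary Mellin integral $\int_0^\infty \gamma^{s-1-\sigma}\,d\gamma$. By the Mellin inversion theorem this acts as a sifting operation on the inner contour $L_1$, collapsing it by forcing $\sigma=s$; equivalently, the Mellin transform in $\gamma$ of the inverse-Mellin representation returns the integrand evaluated at $\sigma=s$ together with the scale factor $\big[\big(\tfrac{\alpha_1\beta_1}{A_{01}h_{p1}}\big)^{r_1}\tfrac{1}{\overline{\gamma}_{H1}}\big]^{-s}$. Substituting $\sigma=s$ into $\Phi_1$ reproduces exactly the prefactor $\Gamma(\eta_s^2+r_1 s)\Gamma(\alpha_1+r_1 s)\Gamma(\beta_1+r_1 s)/[\Gamma(1+\eta_s^2+r_1 s)\Gamma(s)]$ appearing in \eqref{moment}.

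What survives is the single contour integral $\frac{1}{2\pi i}\int_{L_2}\Phi_0(s,\tau)\,\Phi_2(\tau)\,y^{-\tau}\,d\tau$. Here $\Phi_0(s,\tau)$, i.e.\ the coupling term with $\sigma$ frozen at $s$, contributes a $\tau$-linear gamma factor that becomes the upper parameter $(1-s,1)$, while $\Phi_2(\tau)$ supplies the second-hop parameters $(\alpha_2,r_2)$, $(\beta_2,r_2)$, $(0,1)$ and the $2k+1$ repeated pairs $(\tfrac{(1+q_g^2)\varpi}{2q_g},r_2)$. Recognising this as a univariate Fox-H of order $\mathrm{H}^{2k+4,1}_{2k+2,2k+4}$ in the variable $y$ completes the derivation of \eqref{moment}, after restoring the summation over $k$ and the constant prefactors.

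The main obstacle is the rigorous handling of the contour collapse: one must verify that Fubini applies on a suitable vertical strip, specify the range of $s$ for which every gamma argument retains positive real part (so that the moment exists and $L_1,L_2$ can be separated), and check that the $(0,1)$ parameter and the coupling entry transfer correctly into the $\Gamma(s)$ denominator and the $(1-s,1)$ parameter, respectively, rather than being mislabelled. A clean independent check exploits the independence of the two hops in \eqref{E2ESNR}: since $\gamma=\gamma_{H1}\cdot\gamma_{H2}/(\gamma_{H2}+C)$ with $\gamma_{H1}$ and $\gamma_{H2}$ independent, $\mathbb{E}[\gamma^s]=\mathbb{E}[\gamma_{H1}^s]\,\mathbb{E}[(\gamma_{H2}/(\gamma_{H2}+C))^s]$; the first factor reproduces the first-hop gamma block, while writing $(\gamma_{H2}+C)^{-s}$ as a Meijer-$G$ and integrating against \eqref{PDFH2} regenerates the $1/\Gamma(s)$ factor and the residual univariate Fox-H, confirming \eqref{moment}.
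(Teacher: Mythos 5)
Your proposal is correct and lands on the same underlying mechanism as the paper: the $\gamma$-integration is a Mellin transform that freezes the first Mellin variable of the bivariate Fox-H at $s$, producing the first-hop gamma block $\Gamma(\eta_s^2+r_1 s)\Gamma(\alpha_1+r_1 s)\Gamma(\beta_1+r_1 s)/[\Gamma(1+\eta_s^2+r_1 s)\Gamma(s)]$ and leaving a univariate ${\rm H}^{2k+4,1}_{2k+2,2k+4}$ in the second variable, with the coupling entry $(1;-1,1)$ supplying the new parameter $(1-s,1)$. The execution differs from Appendix \ref{I}: the paper first decouples the bivariate Fox-H via \cite[Eq.~(2.3)]{Mittal} into an auxiliary integral $\int_0^\infty x^{-1}e^{-x}\,{\rm H}_1[\cdot/x^{1/r_1}]\,{\rm H}_2[\cdot\,x^{1/r_2}]\,\mathrm{d}x$, then applies the Mellin transform of a single Fox-H \cite[Eq.~(2.8.4)]{Htran1} to the $\gamma$-integral, and finally absorbs the residual $x$-integral of $e^{-x}$ (written as ${\rm H}^{1,0}_{0,1}$) against the second Fox-H using the product-integral formula \cite[Eq.~(1.59)]{Htran2}; you instead work directly on the double Mellin--Barnes contour and collapse $L_1$ by Mellin inversion. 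The two routes are equivalent; yours is more economical in intermediate representations but shifts the burden onto justifying Fubini and the contour separability (which you correctly flag — note that $\int_0^\infty\gamma^{s-1-\sigma}\mathrm{d}\gamma$ is not literally convergent, so the Mellin-inversion phrasing you give is the right one), whereas the paper's route outsources those convergence conditions to the cited Fox-H identities. Your independence-based sanity check $\mathbb{E}[\gamma^s]=\mathbb{E}[\gamma_{H1}^s]\,\mathbb{E}[(\gamma_{H2}/(\gamma_{H2}+C))^s]$ is a worthwhile addition not present in the paper, and it correctly explains the provenance of the $1/\Gamma(s)$ factor from the binomial kernel $(1+C/\gamma_{H2})^{-s}$.
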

\begin{proof}
See Appendix \ref{I}.
\end{proof}

\section{NUMERICAL ANALYSIS}

{ Table~\ref{tab} outlines the simulation parameters and default settings adopted throughout the numerical evaluation. Unless otherwise specified, all results are generated using the values listed in Table~\ref{tab}.} Analytical results are presented alongside Monte Carlo (MC) simulations, and the comparison shows a strong alignment between the derived analytical expressions and the simulation results, confirming the accuracy of our findings.
\begin{table}[!ht]\centering
\setlength{\tabcolsep}{4pt}
\caption{System Parameters}
\begin{tabular}{cccc}
\hline
Parameters & Values                                          & Parameters    & Values                                           \\ \hline
$H_O$      & 10 m                                            & $H_H$         & 18 km                                            \\
$H_I$      & 80 m                                            & $H_U$         & 1 m                                              \\
$Y_{H}$    & 0                                               & $Y_{I}$       & -1000 m                                          \\
$\theta_i$  & ${\pi}/{6}$ & $\theta_r$     & ${3\pi}/{8}$ \\
$Y_{U}$    & -1020 m                                         &   $\omega(d_{HI}, \omega_0)$            & $4a_l$                                                 \\
$r_a$      & 5 mm                                            & $\omega$      & 30 m/s                                           \\
$\omega_b$ & 3$r_a$                                          & $F_0$         & $\infty$                                      \\
V          & 10 km                                           & $\sigma_{S0}$    & $r_a$                                            \\
$\lambda$  & 1550 nm                                         & $\omega_{01}$         & 1 mm                                             \\
$A$        & $1.7\times 10^{-13} \text{m}^{-2/3}$            & $\gamma_{th}$ & 2 dB                                             \\
$\zeta_p$  & 1                                               & $\kappa$      & $0.43\times 10^{-3}$                             \\
$a_l$      & 2.5 mm                                          &    $ \phi_r  $          &  $\pi$   \\
\( \theta_{rl}  \)&0& $(\sigma_s, \sigma_r, \sigma_l)$ & $0.5 (a_l, a_l, a_l)$\\ $\zeta_1$ & $60^\circ$ &$N_k$ & 5\\   \hline                  
\end{tabular}\label{tab}
\end{table}
\subsection{HAP to User link}
For the HAP-to-User channel via OIRS, an approximation was applied to the 3D GML model with a Hoyt distribution as proposed in \cite{OIRS1}. Therefore, it is crucial to determine the minimum value of $N_x$ that ensures the approximation achieves the desired level of accuracy. Fig.\ref{hg2A} presents a comparison between the exact and approximate PDF of GML under different fluctuation conditions.

From Fig. \ref{hg2A}, we can observe that when the fluctuation levels of the laser source, OIRS, and receiving lens are all minimal (namely $(\sigma_s, \sigma_r, \sigma_l) = 0.5 (a_l, a_l, a_l)$), setting \( N_x = 0 \) already provides an excellent approximation. To determine which of these fluctuations has the greatest impact on the approximation error, we increased the fluctuation levels for each component individually by the same amount. For both the laser source and the receiver lens, even with higher fluctuation levels (namely $(\sigma_s, \sigma_r, \sigma_l) = 0.5 (2a_l, a_l, a_l)$ and $(\sigma_s, \sigma_r, \sigma_l) = 0.5 (a_l, a_l, 3a_l)$), \( N_x = 0 \) continues to yield a highly accurate approximation. However, when the fluctuations of  OIRS increased ($(\sigma_s, \sigma_r, \sigma_l) = 0.5 (a_l, 2a_l, a_l)$), the approximation with \( N_x = 0 \) shows a noticeable deviation from the exact result. Increasing \( N_x \) to 5, however, restores a high level of accuracy in the approximation. This indicates that fluctuations in the OIRS have the most significant impact on the approximation error.
\begin{figure}[!ht]
\centering\includegraphics[width=0.5\textwidth, trim={20 25 25 25}, clip]{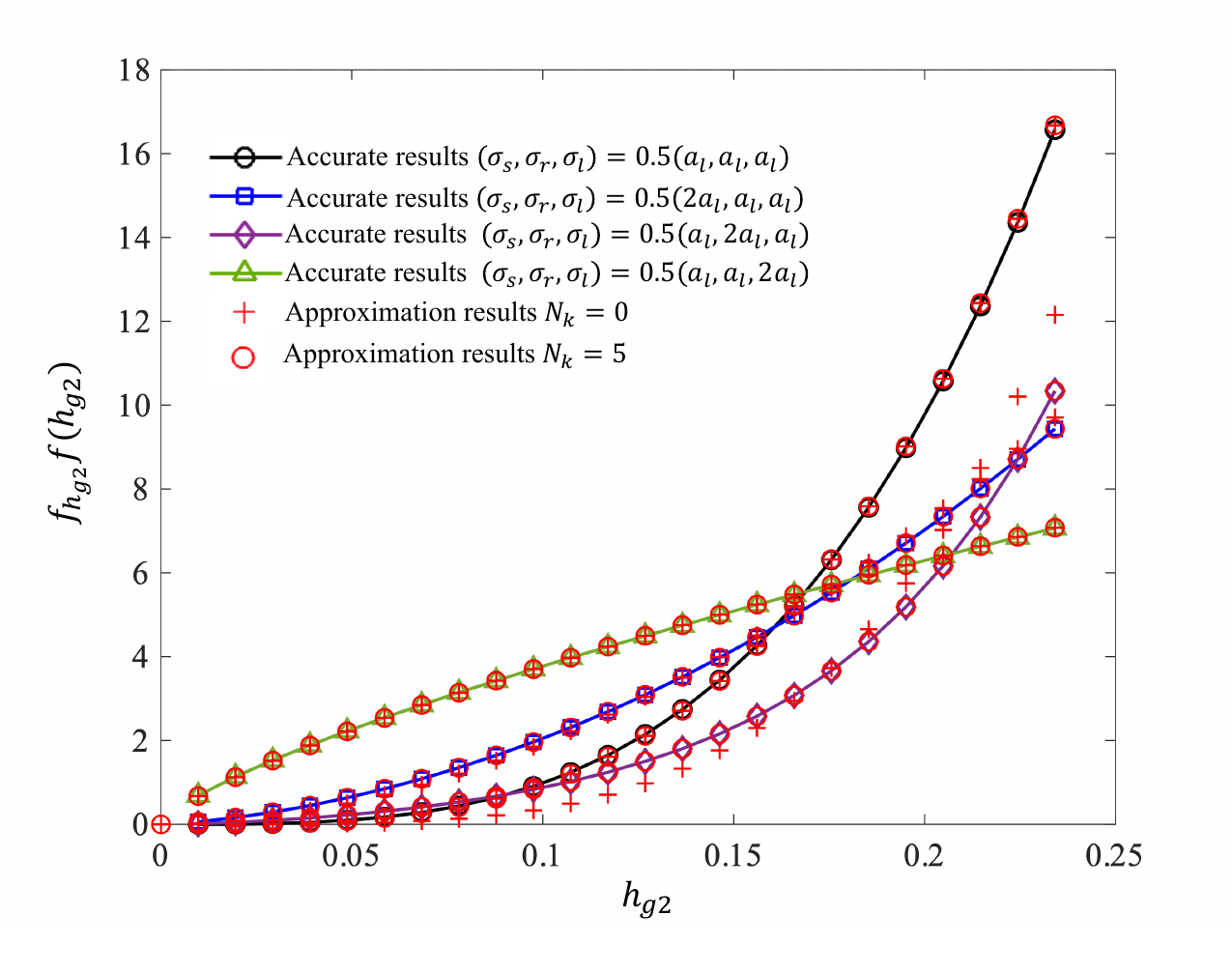}
    \caption{Comparison plot of the exact and approximate PDF of GML under varying fluctuation conditions.}
    \label{hg2A}
\end{figure}

{
Fig.~\ref{hg2A2} illustrates the convergence performance of the truncated series approximation for the PDF of the GML, evaluated under various incident angles $\theta_i$. The vertical axis represents the average $\ell_2$-norm error, denoted by \( \mathcal{E}_{\ell_2} \), between the exact expression in \eqref{PDFhg2} and the approximate expression in \eqref{A2}, while the horizontal axis denotes the number of truncation terms $N_k$. This error metric is computed as:
\begin{equation}
\mathcal{E}_{\ell_2} = \frac{1}{M} \sum_{i=1}^{M} \left( f_{\text{exact}}(x_i) - f_{\text{approx}}(x_i) \right)^2,
\end{equation}
where \( M = 50 \) uniformly spaced points \( \{x_i\}_{i=1}^{M} \) are selected over the support of the distribution. As expected, increasing $N_k$ leads to a rapid reduction in \( \mathcal{E}_{\ell_2} \), demonstrating the accuracy and efficiency of the proposed approximation. Moreover, the convergence behavior depends on the incident angle $\theta_i$; smaller angles (e.g., $\pi/12$) exhibit faster convergence, while larger angles (e.g., $\pi/3$) require more terms to achieve similar accuracy.
\begin{figure}[!ht]
\centering\includegraphics[width=0.47\textwidth]{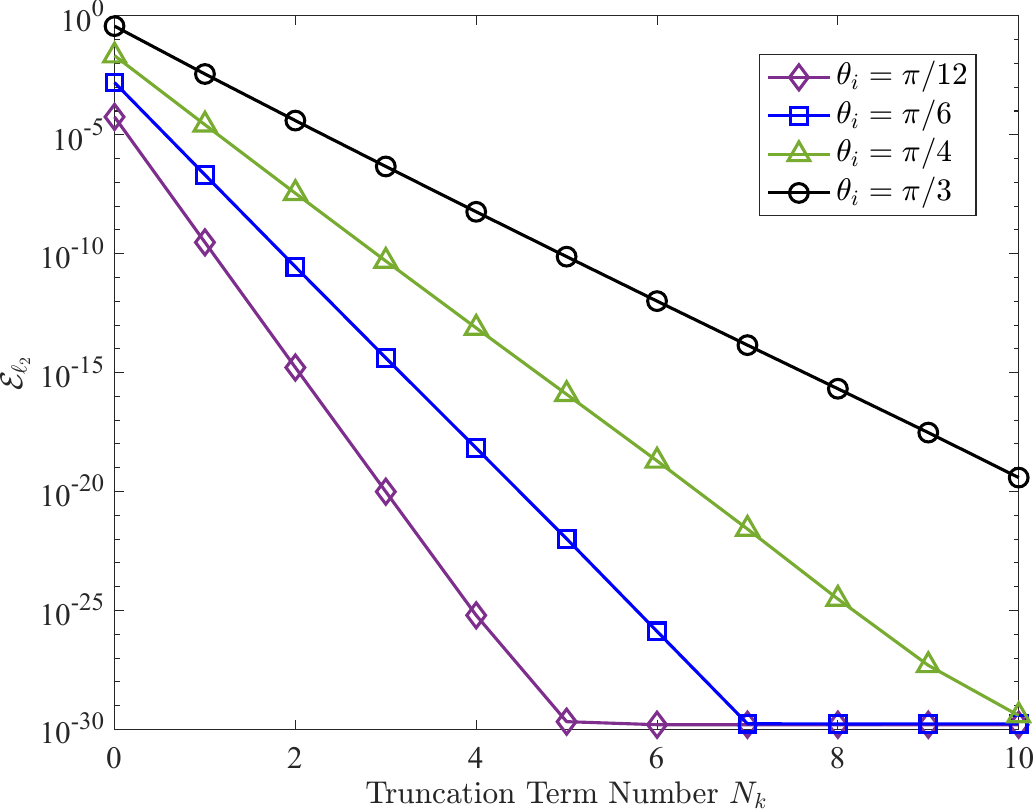}
\caption{{ Convergence behavior of the truncated series approximation for the PDF of GML under different incident angles $\theta_i$.}}
\label{hg2A2}
\end{figure}}

Then we proceed with analyzing various system performance metrics.  Fig. \ref{OP1} illustrates the variation in the OP of the HAP-to-User channel via OIRS concerning the average SNR of the second hop $\overline{\gamma}_{H2}$, under different fluctuation conditions for both two detection methods. 
\begin{figure}[!ht]
\centering\includegraphics[width=0.5\textwidth, trim={20 20 25 25}, clip]{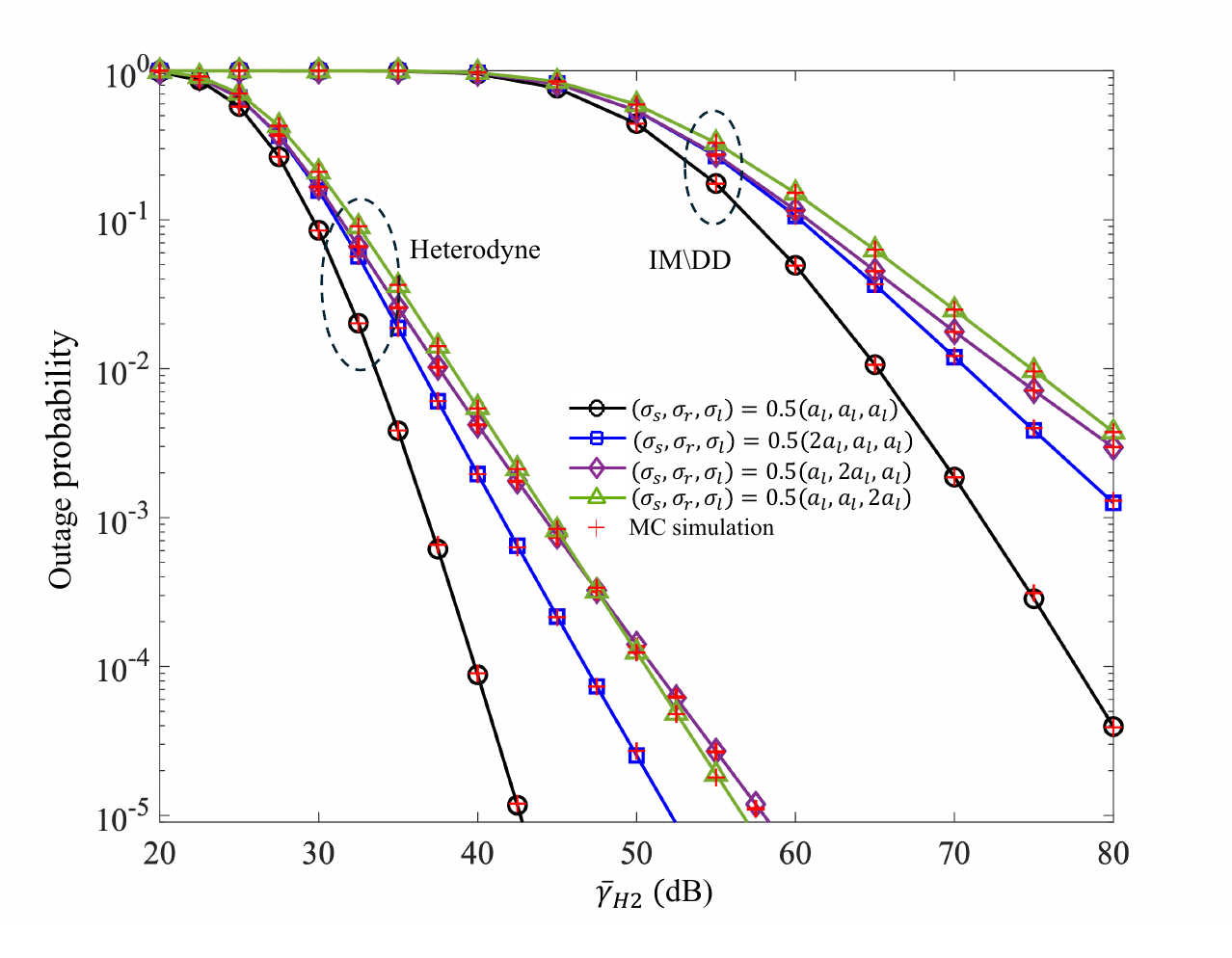}
    \caption{The OP of the HAP-to-User channel via OIRS is analyzed as a function of the SNR under different fluctuation conditions for both detection methods.}
    \label{OP1}
\end{figure}
When the fluctuation levels of the laser source, OIRS, and receiving lens increase uniformly, the system's OP also increases.  The results reveal that fluctuations in the laser source have the least impact on the system's OP, while fluctuations in the OIRS and receiving lens have a nearly identical effect on the system's OP. Specifically, when the SNR \(\gamma_{H2}\) is 40 dB, the OP corresponding to four different fluctuation levels is \(9.0 \times 10^{-5}\), \(2.0 \times 10^{-3}\), \(4.2 \times 10^{-3}\), and \(5.4 \times 10^{-3}\), respectively.

After analyzing the OP, we examine the differences in average BER for various modulation schemes, as illustrated in Fig. \ref{ABER1}, which depicts its variation with system SNR. Overall, OOK has the highest average BER, primarily because it employs the IM/DD detection technique, which shows that using the heterodyne detection technique can significantly enhance the system's performance. For M-QAM and M-PSK, the average BER increases as \( M \) increases. This is because increasing  \( M \)enhances the spectral efficiency of data transmission. With the same bandwidth, a larger \( M \) results in a higher data transmission rate. However, higher transmission rates also lead to a greater probability of errors.
\begin{figure}[!ht]
\centering\includegraphics[width=0.5\textwidth, trim={20 20 25 25}, clip]{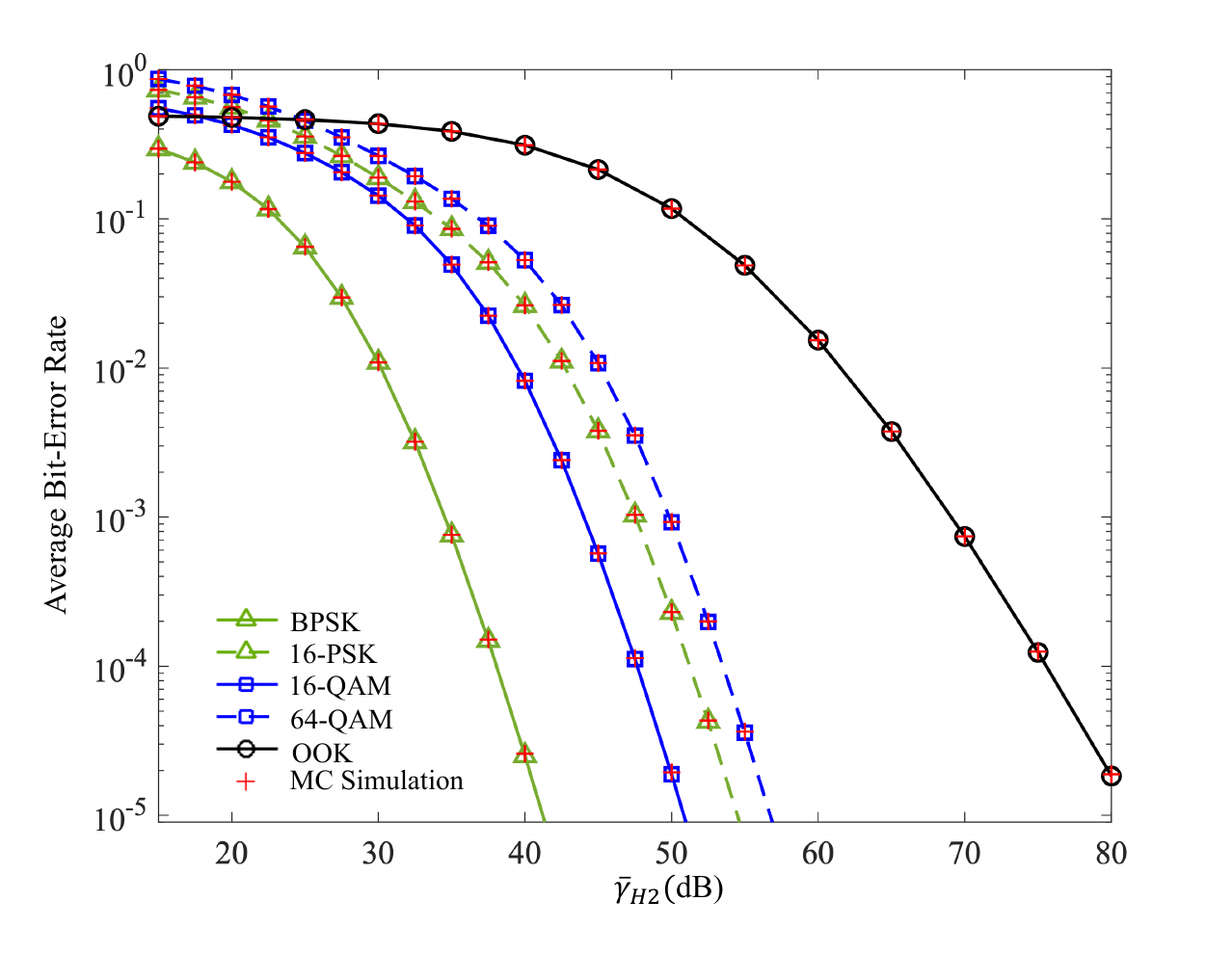}
    \caption{Average BER changes with system SNR for different modulation methods.}
    \label{ABER1}
\end{figure}
\begin{figure}[!ht]
\centering\includegraphics[width=0.5\textwidth, trim={10 20 25 25}, clip]{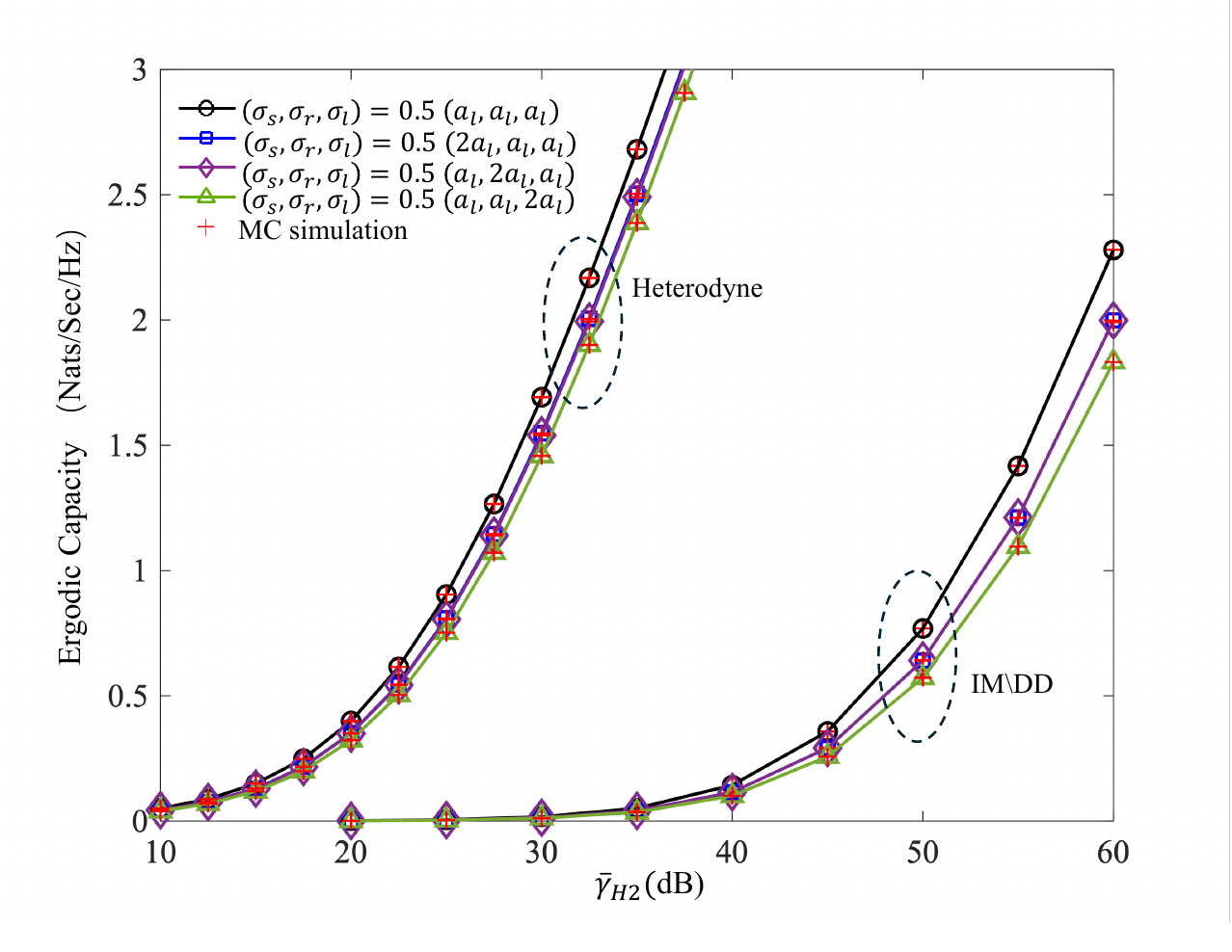}
    \caption{The trend of channel capacity as a function of SNR, under different fluctuation levels and detection methods.}
    \label{Capacity1}
\end{figure}

Finally, we analyze the system's  capacity under both detections, as shown in Fig. \ref{Capacity1}. The results reveal that heterodyne detection achieves significantly higher capacity than IM/DD. Using heterodyne detection as an example, at \(30 \, \mathrm{dB}\) SNR, the channel capacities for the four cases are \(1.69\, \mathrm{Nats/s/Hz}\), \(1.55\, \mathrm{Nats/s/Hz}\), \(1.54\, \mathrm{Nats/s/Hz}\), and \(1.46\, \mathrm{Nats/s/Hz}\), respectively, with laser source fluctuations having the least impact and combined fluctuations causing the most significant degradation. For IM/DD, the overall capacities are lower but follow the same trend. 

\subsection{OGS to User Link}
For the end-to-end system from the OGS, through the HAP and OIRS, to the User, we begin by analyzing the system's OP. Without loss of generality, we assume that the average SNR for both the OGS-to-HAP link and the HAP-to-User link via OIRS are identical. In simulations, we vary these two SNRs simultaneously (namely $\overline{\gamma}_{H1}=\overline{\gamma}_{H2}=\overline{\gamma}_H$, and $r_1=r_2=r$). To evaluate the impact of the OGS-to-HAP link on the overall system, we select the zenith angle of the emitted beam from the OGS, $\zeta_1$, as a variable, as the zenith angle determines the propagation distance of this link. Fig. \ref{OP2zeta} shows the variation in the end-to-end OP with system SNR under different zenith angles, \(\zeta_1\). 
\begin{figure}[!ht]
\centering\includegraphics[width=0.5\textwidth, trim={20 20 25 25}, clip]{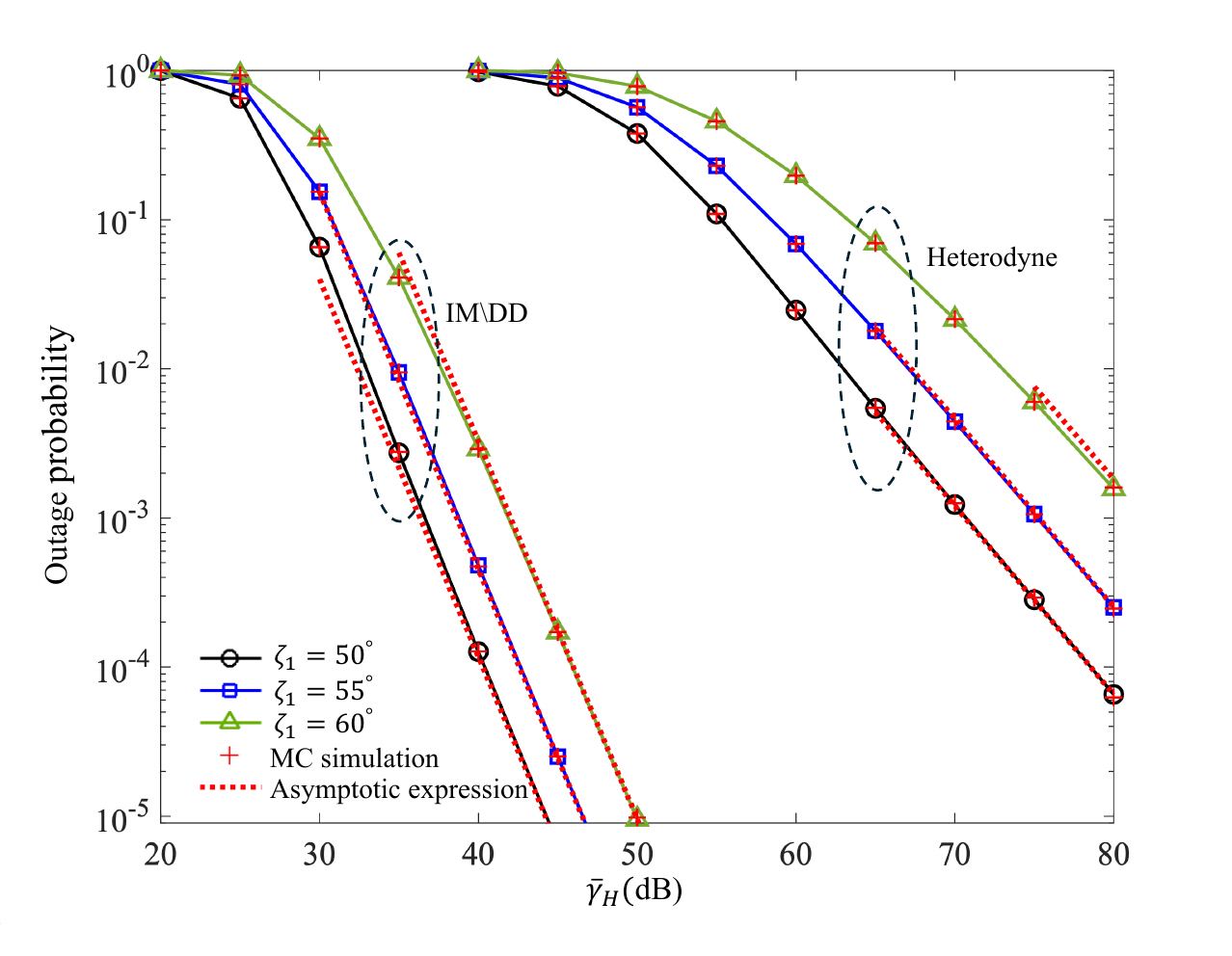}
    \caption{The end-to-end OP from the OGS, through the HAP and OIRS, to the User varies with the system SNR under different zenith angles \(\zeta_1\).}
    \label{OP2zeta}
\end{figure}
The system's OP increases as the zenith angle increases. A larger zenith angle results in a longer propagation distance from the OGS to the HAP, leading to more significant channel impairments from atmospheric turbulence and other factors. Specifically, for \( r = 1 \) and an SNR of 35 dB, the OP values corresponding to zenith angles of 50°, 55°, and 60° are \(2.7 \times 10^{-3}\), \(8.3 \times 10^{-3}\), and \(4.1 \times 10^{-2}\), respectively. The asymptotic results given by (\ref{CDFgammaA}) are very accurate and converge to the exact results at high SNR values.

To analyze the impact of the HAP-to-User link via OIRS on the overall system, we selected a representative level of fluctuation for this model. Given that the fluctuation of OIRS and receiving lens have similar effects on the system, we present results for only one. Fig.\ref{OP2sigma} illustrates how the end-to-end OP from the OGS, through the HAP and OIRS, to the User varies with the system SNR under different fluctuation levels.
\begin{figure}[!ht]
\centering\includegraphics[width=0.5\textwidth, trim={20 20 25 25}, clip]{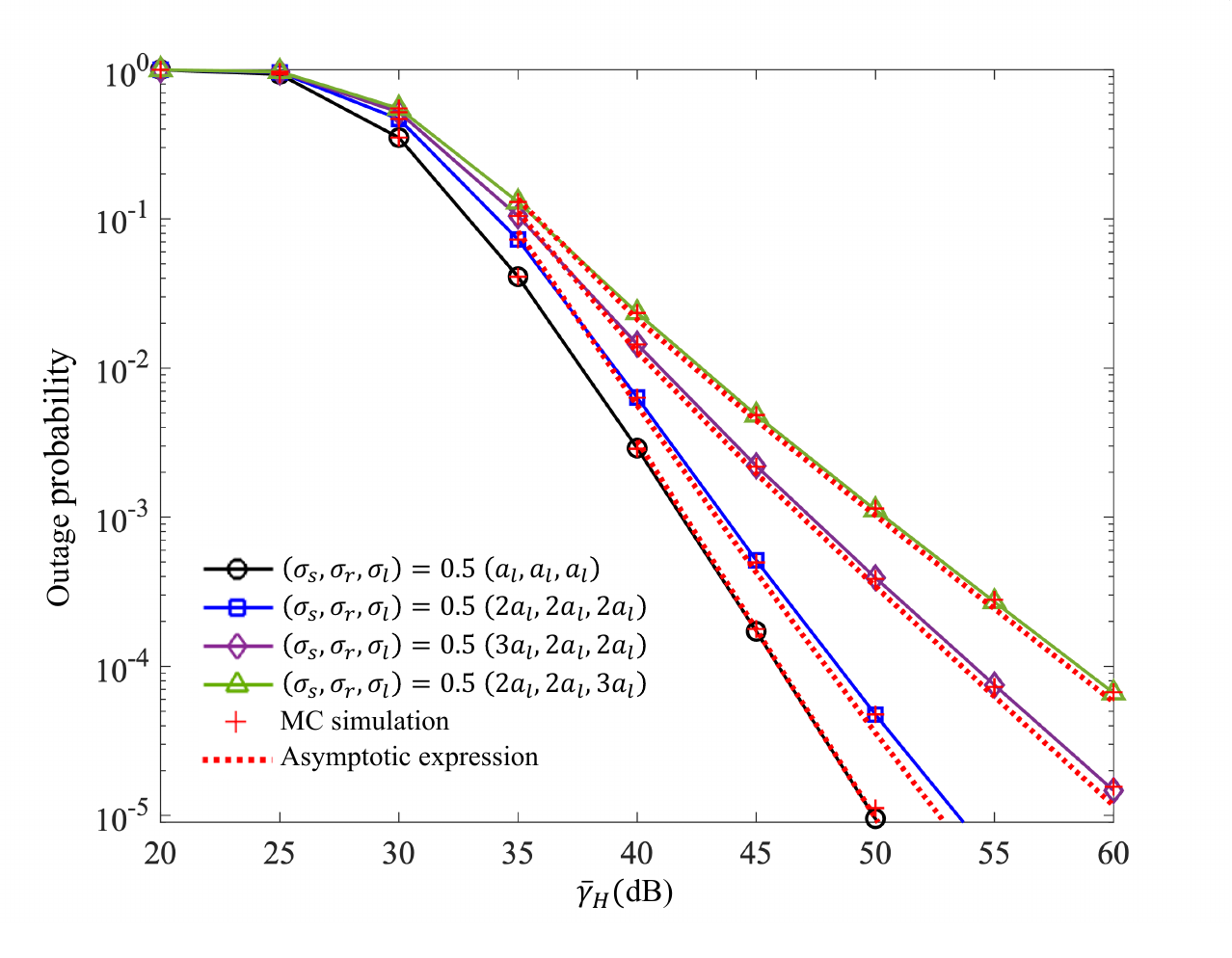}
    \caption{The end-to-end OP from the OGS, through the HAP and OIRS, to the User varies with the system SNR under different fluctuation levels.}
    \label{OP2sigma}
\end{figure}  Fluctuations in the receiving lens have a more significant impact on the system OP than those in the laser source. To clearly illustrate the impact of different fluctuations on system performance (namely $(\sigma_s, \sigma_r, \sigma_l) = 0.5 (2a_l, 2a_l, 2a_l)$), we first increased the fluctuation levels of the laser source, OIRS, and receiver lens equally (namely  $(\sigma_s, \sigma_r, \sigma_l) = 0.5 (3a_l, 2a_l, 2a_l)$,  $(\sigma_s, \sigma_r, \sigma_l) = 0.5 (2a_l, 2a_l, 3a_l)$). Based on this, we further increased the fluctuation levels of the laser source and receiver lens individually by the same amount. Specifically, for  SNR of 40 dB, the OP values corresponding to four different fluctuations cases are \(3.3 \times 10^{-3}\), \(5.6 \times 10^{-3}\), \(1.3 \times 10^{-2}\) and \(2.1 \times 10^{-2}\), respectively. The asymptotic results align perfectly with the derived analytical results in the high SNR range.

Next, we analyze how the system's average BER varies with the system SNR. Fig. \ref{ABER2} illustrates the end-to-end average BER from the OGS, through the HAP and OIRS, to the User under different modulation schemes. \begin{figure}[!ht]
\centering\includegraphics[scale=0.45, trim={20 20 25 25}, clip]{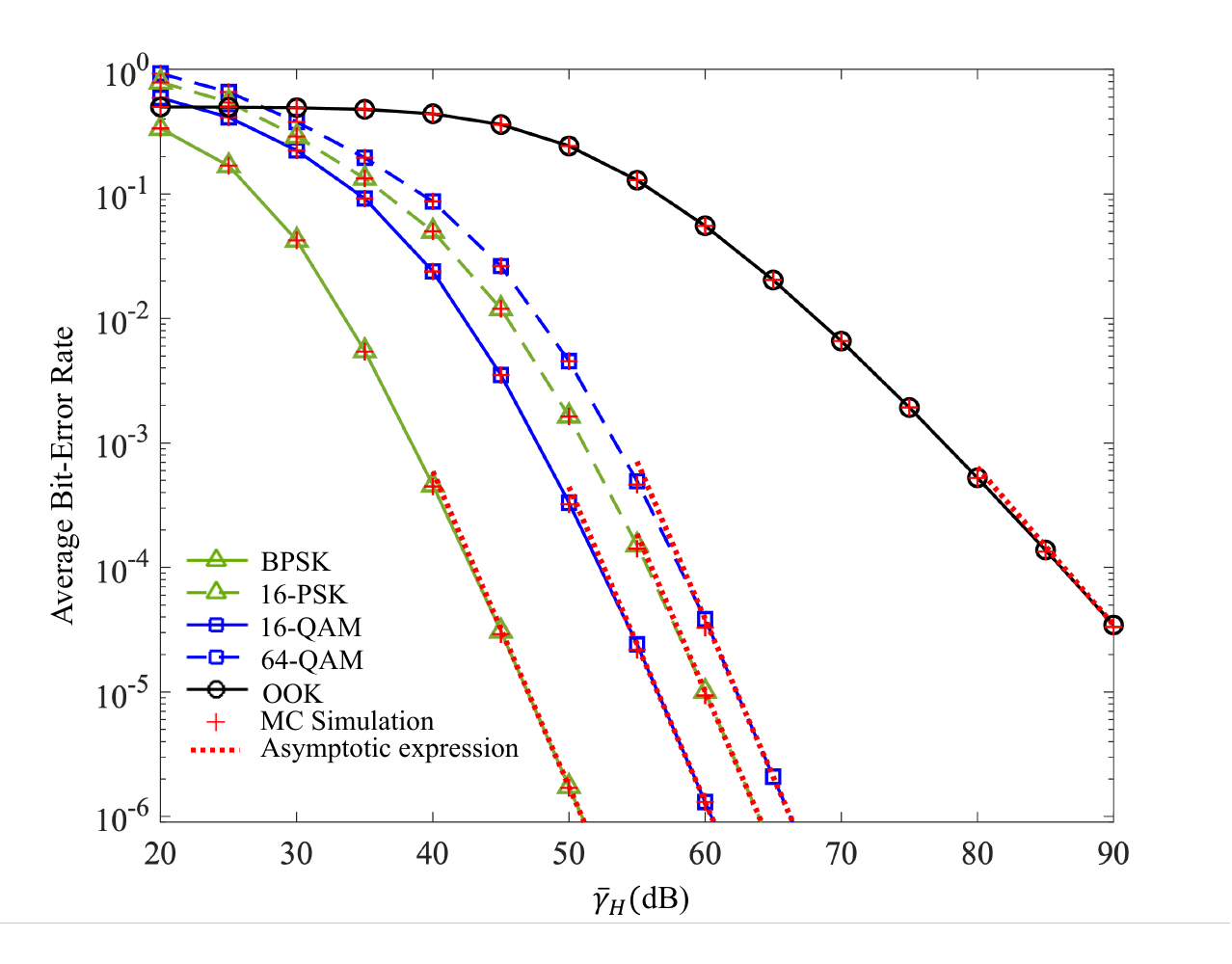}
    \caption{The end-to-end average BER from the OGS, through the HAP and OIRS, to the User varies with the system SNR under different modulation schemes.}
    \label{ABER2}
\end{figure} OOK exhibits the highest average BER, primarily due to its reliance on IM/DD detection, underscoring the performance benefits of heterodyne detection. For both M-QAM and M-PSK, BER increases with $M$ as higher $M$ improves spectral efficiency and data rate and raises error probability. The asymptotic results given in (\ref{IgammaA}) perfectly agree with the derived analytical results at the high SNR range.

\begin{figure}[!ht]
\centering\includegraphics[width=0.5\textwidth, trim={20 20 25 25}, clip]{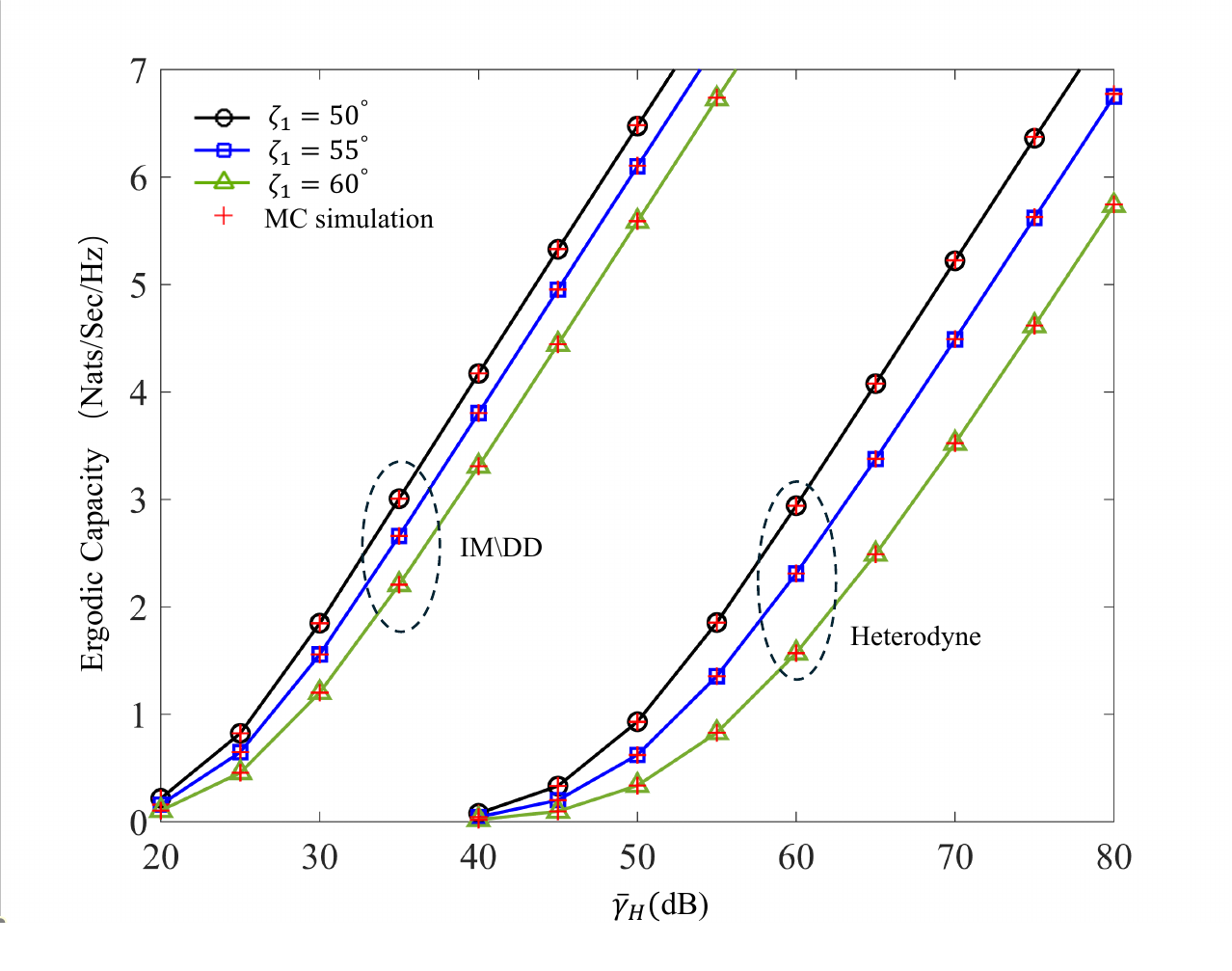}
    \caption{The end-to-end channel capacity from the OGS, through the HAP and OIRS, to the User varies with the system SNR.}
    \label{capacity2}
\end{figure} 
Finally, the end-to-end SNR's ergodic capacity for various zenith angle values is analyzed. Fig. \ref{capacity2} shows the end-to-end channel capacity from the OGS, through the HAP and OIRS, to the User as a function of average SNR. The channel capacity decreases as the zenith angle increases due to the longer propagation distance and more significant interference.  { Note that Fig. \ref{capacity2} only illustrates the performance for a representative zenith angle between the OGS and the HAP. While different zenith angles alter the slant range and thus the path loss, they affect the system performance in a similar fashion. Hence, showing one representative case sufficiently captures the trend without loss of generality.}

{ Fig.~\ref{compare} compares the OP performance between AF and DF relaying schemes under varying zenith angles $\zeta_1$. The OP for the DF scheme is evaluated based on the condition that communication is uninterrupted if and only if both link SNRs ($\gamma_{H1}$ and $\gamma_{H2}$) individually exceed the outage threshold $\gamma_{\text{th}}$, as described in \cite[Eq.~(26)]{singya2022mixed}. In contrast, for the AF scheme, communication continuity requires $\gamma > \gamma_{\text{th}}$. As illustrated, the DF scheme consistently achieves lower OP compared to AF, owing to its ability to independently decode and regenerate signals at the relay, thus effectively preventing noise amplification. However, it is notable from the numerical results that the AF scheme exhibits only slightly inferior performance compared to DF. For example, at $\bar{\gamma}_H = 30\,\text{dB}$ and $\zeta_1 = 60^{\circ}$, the OP of DF and AF schemes are approximately $0.26$ and $0.31$, respectively, reflecting only a modest difference. Thus, although the AF scheme incurs some performance degradation relative to DF, it offers substantial advantages in terms of lower complexity, reduced power consumption, and simpler hardware implementation, making AF relaying particularly suitable for deployment on energy-constrained platforms like HAPs.
\begin{figure}[!ht]
\centering
\includegraphics[scale=0.43]{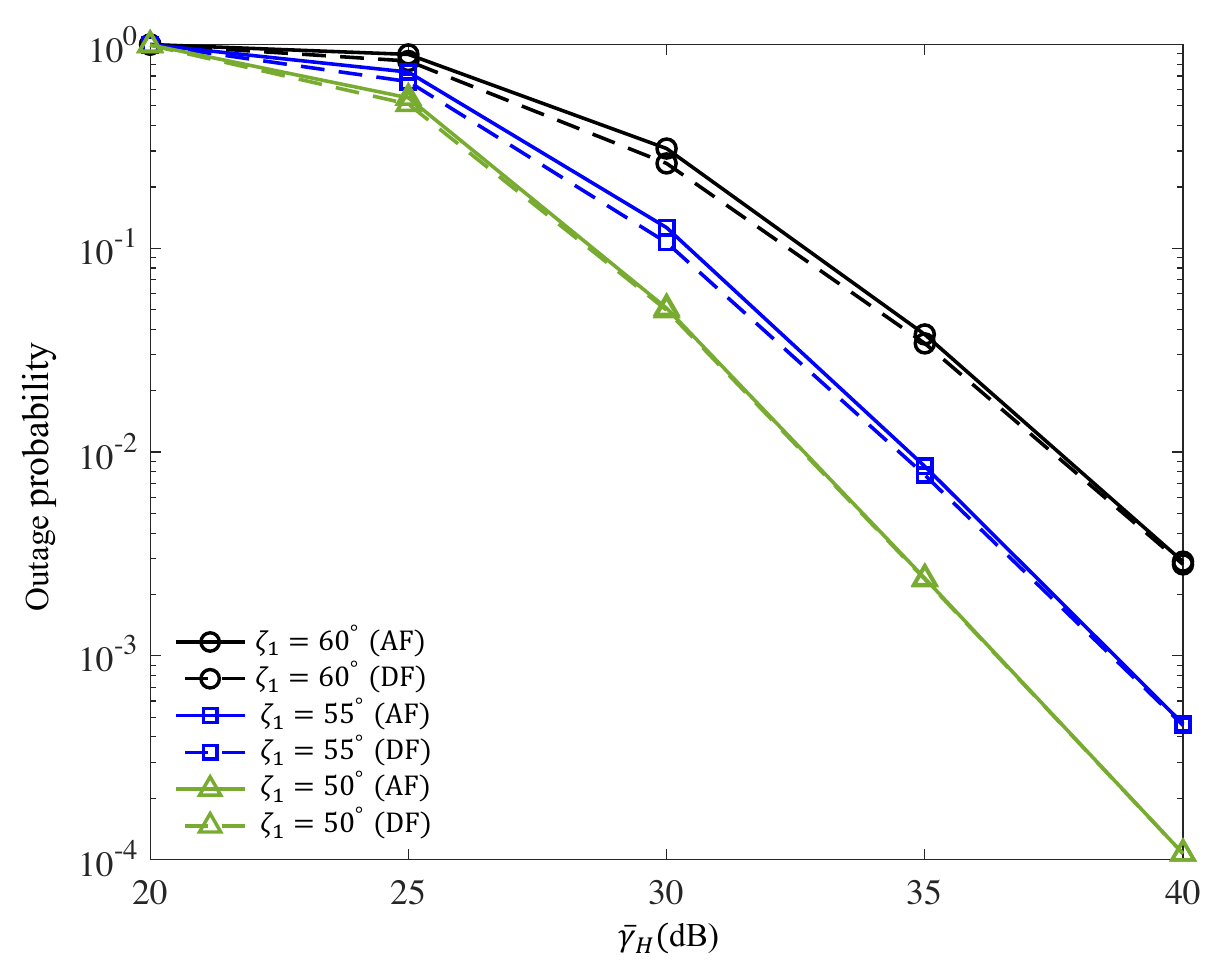}
\caption{Comparison of outage probability for AF and DF relaying schemes under different zenith angles $\zeta_1$.}
\label{compare}
\end{figure}}

\section{Conclusion}
In this work, we propose an innovative system that combines HAPs and OIRS to address LOS challenges in urban settings. Our three-hop system setup includes an OGS, HAP, OIRS, and a User.  For the OIRS link, we address key channel impairments such as atmospheric turbulence, pointing errors, attenuation, and GML by employing a GG model alongside a 3D GML model based on a Hoyt distribution. We  obtain closed-form expressions for OP and various performance metrics. The accuracy of our proposed approximation is verified against exact results, and the accuracy of our expressions for system OP and other performance metrics are confirmed through MC simulations.  Additionally, we analyze the end-to-end SNR statistics and derive closed-form expressions for OP and performance metrics for IM/DD and heterodyne detection methods. Asymptotic expressions are provided for high-SNR regimes, which facilitates the calculation of system diversity order.  At the same time, we also assess the impact of the OGS zenith angle and fluctuations in the laser source, OIRS, and receiving lens on overall system performance. { 
The outcomes of this research provide practical guidelines for system designers and engineers on selecting optimal OIRS deployment locations, setting appropriate beam alignment tolerances, and determining suitable relay amplification factors under various weather conditions. Furthermore, our performance comparison demonstrates that employing AF relaying at the HAP incurs only a minor performance loss compared to DF, while significantly reducing system complexity and power consumption, making AF a more practical solution for resource-constrained aerial platforms.
}

{
Although the proposed HAP and OIRS system offers significant advantages in coverage, several practical challenges remain. First, precise calibration and maintenance of the OIRS units, particularly ensuring accurate alignment under user mobility, require high resolution angle sensing and low latency closed loop control systems. Second, real time beam steering relies on energy efficient algorithms.
These aspects represent promising directions for future research to ensure the practical viability of the proposed architecture.
}

\appendices
\numberwithin{equation}{section}%
{ \section{PDF of $h_g2$}
\label{A0}
Using \cite[Eq.~(8.444)]{intetable}, the PDF expression of $h_{g2}$ in (\ref{PDFhg2}) can be re-written as
\begin{equation}
\begin{aligned}
    f_{h_{g2}}(h_{g2}) =&\,\, \frac{\varpi}{A_{02}} \left( \frac{h_{g2}}{A_{02}} \right) \left( \frac{(1+q_g^2)\varpi}{2q_g} \right)^{-1} \\&\times\sum_{k=0}^{\infty} \frac{1}{k! \Gamma(1+k)} \left( \frac{(1 - q_g^2)\varpi}{4q_g} \ln \left( \frac{h_{g2}}{A_{02}} \right) \right)^{2k}.
    \end{aligned}
\end{equation}
By replacing the infinity with \( N_k \), we can obtain an approximate version of (\ref{PDFhg2}). However, to ensure that the resulting approximation still represents a valid PDF, we need to include a normalization factor \( \mathcal{N} \) in the expression, then we can get (\ref{A2}). If (\ref{A2}) is still a valid PDF, then it should satisfy
\begin{equation}
\label{A3}
\begin{aligned}
      \sum_{k=0}^{N_k}& \frac{1}{k! \Gamma(1+k)}\int_0^{A_{02}} \left( \frac{(1 - q_g^2)\varpi}{4q_g} \ln \left( \frac{h_{g2}}{A_{02}} \right) \right)^{2k}\\& \times\frac{ \mathcal{N}\varpi}{A_{02}} \left( \frac{h_{g2}}{A_{02}} \right) \left( \frac{(1+q_g^2)\varpi}{2q_g} \right)^{-1} \mathrm{d}h_{g2}=1.
       \end{aligned}
\end{equation}
Using the integral formula \cite[Eq.~(4.272.6)]{intetable}  and (\ref{A3}), then the expression of $\mathcal{N}$ can be obtained as (\ref{mathcalN}).

\section{PDF of $h_2$}
\label{A}
The PDF of the channel gain $h_2$ can be written as
\begin{equation}
\label{A4}
    f_{h_2}(h_2) = \int_{\frac{h_2}  {A_{02} h_{p2}}}^{\infty} f_{h_{g2}} \left( \frac{h_2}{h_{p2} h_{a2}} \right)  \frac{f_{h_{a2}}(h_{a2})}{h_{p2} h_{a2}}\mathrm{d}h_{a2}.
\end{equation}

Let $t=\frac{h_2}{A_{02}h_{p2} h_{a2}}$, then (\ref{A4}) can be simplified into
\begin{equation}
\label{A5}
    f_{h_2}(h_2) = \int_0^1 \frac{f_{h_{g2}}(A_{02}t)}{t h_{p2}} f_{h_{a2}} \left( \frac{h_2}{A_{02} h_{p2} t} \right) \, \mathrm{d}h_{a2}.
\end{equation}
Substituting (\ref{PDFhg2}) and (\ref{pdfha2}) into (\ref{A5}), employing \cite[Eq.~(14)]{MeijerGalgorithm} and the definition of the Meijer-G function in \cite[Eq.~(9.301)]{intetable} yields
\begin{equation}
\begin{aligned}
&f_{h_2}(h_2) \,= \frac{2 \varpi \mathcal{N}}{h_2 \Gamma(\alpha_2)\Gamma(\beta_2)} \left( \frac{h_2 \alpha_2 \beta_2}{A_{02} h_{p2}} \right)^{\frac{\alpha_2 + \beta_2}{2}} \sum_{k=0}^{N_k} \frac{1}{k! \Gamma(1+k)} \\&\times\left( \frac{(1 - q_g^2)\varpi}{4q_g} \right)^{2k} 
\frac{1}{2 \pi \mathrm{i}} \int_{\mathcal{C}_1} \Gamma \left( \frac{\alpha_2 - \beta_2}{2} - s \right) \Gamma \left( \frac{\beta_2 - \alpha_2}{2} - s \right)\\& \times\left( \frac{\alpha_2 \beta_2 h_2}{A_{02} h_{p2}} \right)^s 
\int_0^1 t^{\frac{(1 + q_g^2) \varpi}{2 q_g} - 1 - \frac{\alpha_2 + \beta_2}{2} - s} (\ln(t))^{2k} \, \mathrm{d}t \mathrm{d}s.
\label{A6}
\end{aligned}
\end{equation}
where $\mathcal{C}_1$ and is the s-plane contours.  Using \cite[Eq.~(4.272.6)]{intetable}, we can obtain
\begin{equation}
\begin{aligned}
\int_0^1 &t^{\frac{(1 + q_g^2) \varpi}{2 q_g} - 1 - \frac{\alpha_2 + \beta_2}{2} - s} (\ln t)^{2k} \, \mathrm{d}t 
\\&= \frac{\Gamma(2k+1)}{\left( \frac{(1 + q_g^2) \varpi}{2 q_g} - \frac{\alpha_2 + \beta_2}{2} - s \right)^{2k}}
\end{aligned}
\label{A7}
\end{equation}
Substituting \eqref{A7} in to \eqref{A6} and using the definition of the Meijer-G function, (\ref{PDFh2}) can be obtained.}

\numberwithin{equation}{section}%
\section{$I_2(p_{B}, q_{B m})$}
\label{B}
Substituting (\ref{CDFH2})  into (\ref{DI2}) and using the Meijer-G function's primary definition in \cite[Eq.~(9.301)]{intetable}  yields
\begin{equation}
\begin{aligned}
I_2(&p_B, q_{Bm}) = \frac{1}{2} 
- \frac{q_{Bm}^{p_B}}{2 \Gamma(p_B)} \frac{\varpi \mathcal{N}}{\Gamma(\alpha_2) \Gamma(\beta_2)} \sum_{k=0}^{N_k} \frac{\Gamma(1 + 2k)}{k! \Gamma(1 + k)}\\
&\times \left[ \frac{(1 - q_g^2) \varpi}{4q_g} \right]^{2k}  \frac{1}{2\pi \mathrm{i}} \int_{\mathcal{C}_1}  \frac{\Gamma(-s)}{\Gamma(1 - s)} \Gamma(\alpha_2 - s) \Gamma(\beta_2 - s) \\
&\times \left[ \frac{\Gamma \left( \frac{(1 + q_g^2)\varpi}{2q_g} - s \right)}{\Gamma \left( 1 + \frac{(1 + q_g^2)\varpi}{2q_g} - s \right)} \right]^{1 + 2k}  \left[  \frac{\alpha_2 \beta_2}{A_{02} h_{p2}} \left( \frac{1}{\overline{\gamma}_{H2}} \right)^{\frac{1}{r_2}} \right]^s \\
&\times\int_0^\infty x^{p_B + \frac{s}{r_2} - 1} \exp(-q_{Bm} x) \mathrm{d}x  \mathrm{d}s.
\end{aligned}
\end{equation}
Using the definition of Gamma Function $\Gamma(z)$ in \cite{intetable}, $\Gamma(z)=\int_0^{\infty} t^{z-1} e^{-t} \mathrm{d} t$, (\ref{ABERIH2}) can be get.
\numberwithin{equation}{section}%
\section{The Capacity of $\gamma_{H2}$}
\label{C}
Substituting (\ref{PDFH2})  into (\ref{DefC2}) and using the definition of the MeijierG function  yields
{\begin{equation}
\scalebox{1}{$\begin{aligned}
&\overline{C}_2 = \frac{\varpi \mathcal{N}}{r_2 \Gamma(\alpha_2) \Gamma(\beta_2)} \sum_{k=0}^{N_k} \frac{\Gamma(1 + 2k)}{k! \Gamma(1 + k)} \left( \frac{(1 - q_g^2) \varpi}{4 q_g} \right)^{2k} \\&\times\frac{1}{2 \pi \mathrm{i}} \int_{\mathcal{C}_1}  \Gamma(\alpha_2 - s) \Gamma(\beta_2 - s) 
\left[ \frac{\Gamma \left( \frac{(1 + q_g^2) \varpi}{2 q_g} - s \right)}{\Gamma \left( 1 + \frac{(1 + q_g^2) \varpi}{2 q_g} - s \right)} \right]^{1 + 2k}\\&\times
 \left( \frac{\alpha_2 \beta_2}{A_{02} h_{p2}} \left( \frac{1}{\overline{\gamma}_{H2}} \right)^{\frac{1}{r_2}} \right)^s \int_0^\infty \gamma_{H2}^{\frac{s}{r_2} - 1} \ln(1 + c_0 \gamma_{H2})  \mathrm{d}\gamma_{H2} ds.
\end{aligned}$}
\end{equation}}
Using the integral identity $\int_0^{\infty} x^{\mu-1} \ln (1+\gamma x) \mathrm{d} x=\frac{\pi}{\mu \gamma^\mu \sin \mu \pi}$ in \cite[Eq.~(4.293/10)]{intetable} along with  $\Gamma(z) \Gamma(1-z)=\frac{\pi}{\sin \pi z}$ in \cite[Eq.(2), p99]{specialfunc},  the ergodic capacity can be derived in (\ref{C2}).

\numberwithin{equation}{section}%
\section{PDF and CDF of $\gamma$}
\label{E}
The CDF of the end-to end SNR $\gamma$ in (\ref{E2ESNR}) can be formulated as
\begin{align}
\label{E1}
    F_{\gamma}(\gamma)=\int_{0}^{\infty} F_{\gamma_{H1}}\left(\gamma\left(1+\frac{C}{x}\right)\right) f_{\gamma_{H2}}(x)\, \mathrm{d}x.
\end{align}
Substituting (\ref{CDFH}) and (\ref{PDFH2}) into (\ref{E1}), and using the definition of MeijerG function, we can get
\begin{equation}
\label{E2}
\scalebox{1}{$\begin{aligned}
&F_{\gamma}(\gamma) 
= 1 - \frac{\eta_s^2 \varpi \mathcal{N}}{   \Gamma(\alpha_1) \Gamma(\beta_1) r_2 \Gamma(\alpha_2) \Gamma(\beta_2)} 
\\
&\times\sum_{k=0}^{N_k} \frac{\Gamma(1 + 2k)}{k! \Gamma(1 + k)} \left( \frac{(1 - q_g^2) \varpi}{4 q_g} \right)^{2k} \\
&\times    \left(\frac{1}{2 \pi \mathrm{i}}\right)^2 
\int_{\mathcal{C}_1} \int_{\mathcal{C}_2}  \left[ \frac{\Gamma\left(\frac{(1 + q_g^2) \varpi}{2 q_g} - s\right)}{\Gamma\left(1 + \frac{(1 + q_g^2) \varpi}{2 q_g} - s\right)}\right]^{1 + 2k}  \\
&\times \frac{\Gamma(-t) \Gamma(\eta_s^2   - t) \Gamma(\alpha_1 - t) \Gamma(\beta_1 - t) \Gamma(\alpha_2 - s) \Gamma(\beta_2 - s)}
{\Gamma(1 + \eta_s^2   - t) \Gamma(1 - t)}\\&\times
\left( \frac{\alpha_1 \beta_1}{A_{01} h_{p_1}} \right)^t 
\left( \frac{\gamma}{\overline{\gamma}_{H1}} \right)^{\frac{t}{r_1}} 
\left( \frac{\alpha_2 \beta_2}{A_{02} h_{p_2}} \right)^s 
\left( \frac{1}{\overline{\gamma}_{H2}} \right)^{\frac{s}{r_2}} \\
& \times\int_{0}^{\infty} x^{\frac{s}{r_2} - 1} \left(1 + \frac{C}{x}\right)^{\frac{t}{r_1}} \, \mathrm{d}x \, \mathrm{d}t \, \mathrm{d}s \,  ,
\end{aligned}$}
\end{equation}
where ${\mathcal{C}_1}$  and ${\mathcal{C}_2}$ are the s-plane and the t-plane contours,
respectively. Then, by utilizing \cite[Eq. (3.251/11)]{intetable}, the CDF of $\gamma$ in (\ref{E2}) can be re-written in the form:
{\begin{equation}
\label{E4}
\scalebox{1}{$\begin{aligned}
F_{\gamma}(\gamma)&\, = 1 - \frac{\eta_s^2 \varpi \mathcal{N}}{   \Gamma(\alpha_1) \Gamma(\beta_1) r_1r_2 \Gamma(\alpha_2) \Gamma(\beta_2)} 
\\
&\times\sum_{k=0}^{N_k} \frac{\Gamma(1 + 2k)}{k! \Gamma(1 + k)} \left( \frac{(1 - q_g^2) \varpi}{4 q_g} \right)^{2k}    \\
&\times \left( \frac{1}{2 \pi \mathrm{i}} \right)^2 \int_{\mathcal{C}_1} \int_{\mathcal{C}_2} 
\left[ \frac{\Gamma\left(\frac{(1 + q_g^2) \varpi}{2 q_g} - s\right)}{\Gamma\left(1 + \frac{(1 + q_g^2) \varpi}{2 q_g} - s\right)} \right]^{1 + 2k} \\
& \times\frac{ \Gamma(\eta_s^2   - t) \Gamma(\alpha_1 - t) \Gamma(\beta_1 - t) \Gamma(\alpha_2 - s) \Gamma(\beta_2 - s)}
{\Gamma(1 + \eta_s^2   - t)  \Gamma\left(1-\frac{t}{r_1}\right)} \\
& \times\Gamma\left(-\frac{s}{r_2}\right) 
\left( \frac{\alpha_1 \beta_1}{A_{01} h_{p_1}} \right)^t 
\left( \frac{\gamma}{\overline{\gamma}_{H1}} \right)^{\frac{t}{r_1}} 
\left( \frac{\alpha_2 \beta_2}{A_{02} h_{p_2}} \right)^s  \\&\times
\left( \frac{C}{\overline{\gamma}_{H2}} \right)^{\frac{s}{r_2}}
\Gamma\left(-\frac{t}{r_1} + \frac{s}{r_2}\right) \, \mathrm{d}x \, \mathrm{d}t \, \mathrm{d}s \,   .
\end{aligned}$}
\end{equation}}
Then, applying (1.1) of \cite{Mittal}, (\ref{CDFgamma}) can be obtained.

Now, by differentiating (\ref{E4}) with respect to $\gamma$, we can compute the PDF of $\gamma$ as:
{\begin{equation}
\label{E5}
\scalebox{1}{$\begin{aligned}
f_{\gamma}(\gamma) \,&= \frac{\eta_s^2 \varpi \mathcal{N}}{   \Gamma(\alpha_1) \Gamma(\beta_1) r_1r_2 \Gamma(\alpha_2) \Gamma(\beta_2)} \\
&\times
\sum_{k=0}^{N_k} \frac{\Gamma(1 + 2k)}{k! \Gamma(1 + k)} \left( \frac{(1 - q_g^2) \varpi}{4 q_g} \right)^{2k}    \\
&\times \left( \frac{1}{2 \pi \mathrm{i}} \right)^2 \int_{\mathcal{C}_1} \int_{\mathcal{C}_2}  
\left[ \frac{\Gamma\left(\frac{(1 + q_g^2) \varpi}{2 q_g} - s\right)}{\Gamma\left(1 + \frac{(1 + q_g^2) \varpi}{2 q_g} - s\right)} \right]^{1 + 2k} \\
&\times \frac{ \Gamma(\eta_s^2   - t) \Gamma(\alpha_1 - t) \Gamma(\beta_1 - t) \Gamma(\alpha_2 - s) \Gamma(\beta_2 - s)}
{\Gamma(1 + \eta_s^2   - t)  \Gamma\left(-\frac{t}{r_1}\right)} \\
&\times \Gamma\left(-\frac{s}{r_2}\right) 
\left( \frac{\alpha_1 \beta_1}{A_{01} h_{p_1}} \right)^t 
\left( \frac{\gamma}{\overline{\gamma}_{H1}} \right)^{\frac{t}{r_1}} 
\left( \frac{\alpha_2 \beta_2}{A_{02} h_{p_2}} \right)^s \\&\times
\left( \frac{C}{\overline{\gamma}_{H2}} \right)^{\frac{s}{r_2}} 
\Gamma\left(-\frac{t}{r_1} + \frac{s}{r_2}\right) \, \mathrm{d}x \, \mathrm{d}t \, \mathrm{d}s \,   .
\end{aligned}$}
\end{equation}}
Then, applying (1.1) of \cite{Mittal}, (\ref{PDFgamma}) can be obtained.

\numberwithin{equation}{section}%
\section{Asymptotic result for  OP  of the end-to-end link}
\label{F}
According to 
the  Meijer-G function's primary definition in \cite[Eq.~(9.301)]{intetable}, the integral involving $t$ in (\ref{E4}) can be expressed in the form of a Meijer-G function. Then (\ref{E4}) can be reformulated as:
{\begin{equation}
\scalebox{1}{$\begin{aligned}
&F_{\gamma}(\gamma) = 1 - \frac{\eta_s^2 \varpi \mathcal{N}}{   \Gamma(\alpha_1) \Gamma(\beta_1) r_1 r_2 \Gamma(\alpha_2) \Gamma(\beta_2)} 
\sum_{k=0}^{N_k} \frac{\Gamma(1 + 2k)}{k! \Gamma(1 + k)}\\&\times \left( \frac{(1 - q_g^2) \varpi}{4 q_g} \right)^{2k}     \frac{1}{2 \pi \mathrm{i}} \int_{\mathcal{C}_1}\left[\frac{\Gamma\left(\frac{(1 + q_g^2) \varpi}{2 q_g} - s\right)}{\Gamma\left(1 + \frac{(1 + q_g^2) \varpi}{2 q_g} - s\right)} \right]^{1+2k}\\&\times{\rm H}^{4,0}_{2,4} \!\left[ \!\frac{\alpha_1 \beta_1}{A_{01} h_{p_1}} \left( \frac{\gamma}{\overline{\gamma}_{H1}} \right)^{\frac{1}{r_1}} 
\!\middle| \!\!\!
\begin{array}{c}
(1 + \eta_s^2  , 1), (1, \frac{1}{r_1}) \\
(\frac{s}{r_2}, \frac{1}{r_1}), (\eta_s^2  , 1), (\alpha_1, 1), (\beta_1, 1)
\end{array}\!\!\!\!\right] \\&\times \Gamma(\alpha_2 - s) \Gamma(\beta_2 - s) 
\Gamma\left(-\frac{s}{r_2}\right) 
 \left( \frac{\alpha_2 \beta_2}{A_{02} h_{p_2}} \right)^s \left( \frac{C}{\overline{\gamma}_{H2}} \right)^{\frac{s}{r_2}} \mathrm{d}s.   
\end{aligned}$}
\end{equation}}
For high values of $\overline{\gamma}_{H1}$,  the Fox-H function's can be approximated using \cite[Eq.~(1.8.4)]{Htran1}, then, for high values of $\overline{\gamma}_{H2}$,  the Fox-H function's can be approximated using \cite[Eq.~(1.8.4)]{Htran1}, after some simplification, (\ref{CDFgammaA}) can be obtained.

\numberwithin{equation}{section}%
\section{$I(p_B, q_{Bm})$}
\label{G}
The definition of $I(p_B, q_{Bm})$ is given as
\begin{equation}
\label{G1}
I(p_B, q_{Bm}) = \frac{q_{Bm}^{p_B}}{2 \Gamma(p_B)} \int_{0}^{\infty} \gamma^{p_B - 1} \exp(-q_{Bm} \gamma) F_{\gamma}(\gamma) \, \mathrm{d}\gamma.
\end{equation}
Substituting (\ref{E4}) in to (\ref{G1}), yields
{\begin{equation}
\scalebox{1}{$\begin{aligned}
&I(p_B, q_{Bm})= \frac{1}{2} 
- \frac{\eta_s^2 \varpi \mathcal{N} q_{Bm}^{p_B}}{2 \Gamma(\alpha_1) \Gamma(\beta_1) r_1 r_2 \Gamma(\alpha_2) \Gamma(\beta_2) \Gamma(p_B)} \\
&\times
\sum_{k=0}^{N_k} \frac{\Gamma(1 + 2k)}{k! \Gamma(1 + k)} \left(\! \frac{(1 - q_g^2) \varpi}{4 q_g}\! \right)^{2k}  \!\!\!  \!\!\\
&\times \left( \frac{1}{2 \pi \mathrm{i}} \right)^2 \!\!\!\int_{\mathcal{C}_1} \int_{\mathcal{C}_2}  \!\!\left[\!\!\frac{\Gamma\left(\frac{(1 + q_g^2) \varpi}{2 q_g} - s\right)}{\Gamma\!\!\left(1\! + \!\frac{(1 + q_g^2) \varpi}{2 q_g} \!-\! s\right)} \!\!\right]^{1 + 2k} \Gamma\left(\frac{s}{r_2}-\frac{t}{r_1} \right) \\&\times\frac{\!\Gamma(\eta_s^2   - t) \Gamma(\alpha_1 - t) \Gamma(\beta_1 - t) \Gamma(\alpha_2 - s) \Gamma(\beta_2 - s)} 
{\Gamma(1 + \eta_s^2   - t) \Gamma\left(1 - \frac{t}{r_1}\right)} \\
& \times
\left( \frac{\alpha_1 \beta_1}{A_{01} h_{p_1}} \right)^t 
\left( \frac{1}{\overline{\gamma}_{H1}} \right)^{\frac{t}{r_1}} 
\left( \frac{\alpha_2 \beta_2}{A_{02} h_{p_2}} \right)^s 
\left( \frac{C}{\overline{\gamma}_{H2}} \right)^{\frac{s}{r_2}}\\
&\times\int_{0}^{\infty} \gamma^{\frac{t}{r_1} + p_B - 1} \exp(-q_{Bm} \gamma) \, \mathrm{d}\gamma \mathrm{d}t\mathrm{d}s.
\end{aligned}$}
\end{equation}}

Using the definition of Gamma Function $\Gamma(z)$ in \cite{intetable}, $\Gamma(z)=\int_0^{\infty} t^{z-1} e^{-t} \mathrm{d} t$ and  (1.1) of \cite{Mittal}, (\ref{Igamma}) can be obtained.

Substituting (\ref{CDFgammaA}) in to (\ref{G1}) and using the definition of Gamma Function $\Gamma(z)$ in \cite{intetable}, $\Gamma(z)=\int_0^{\infty} t^{z-1} e^{-t} \mathrm{d} t$, (\ref{IgammaA}) can be obtained.

\numberwithin{equation}{section}%
\section{ergodic capacity of $\gamma$}
\label{H}
The ergodic capacity of the end-to-end system, where the FSO link operates under either heterodyne detection or IM/DD techniques, is given as \cite[Eq. (26)]{lapidoth}
\begin{align}\label{H1}
\overline{C}\triangleq \mathbb{E}[\ln(1+c_0\,\gamma)]=\int_{0}^{\infty}\ln(1+c_0\,\gamma)f_\gamma(\gamma)\,\mathrm{d}\gamma,
\end{align}
substituting (\ref{E5}) into (\ref{H1}) using the integral identity $\int_0^{\infty} x^{\mu-1} \ln (1+\gamma x) \mathrm{d} x=\frac{\pi}{\mu \gamma^\mu \sin \mu \pi}$ in \cite[Eq.~(4.293/10)]{intetable} along with  $\Gamma(z) \Gamma(1-z)=\frac{\pi}{\sin \pi z}$ in \cite[Eq.(2), p99]{specialfunc}, and applying (1.1) of \cite{Mittal}, the ergodic capacity can be derived in (\ref{Capacitygamma}).

\numberwithin{equation}{section}%
\section{The s-Moments of $\gamma$}
\label{I}
Using \cite[Eq.~(2.3)]{Mittal}, the PDF of $\gamma$ can be rewritten as
\begin{equation}
\label{I1}
\scalebox{0.95}{$\begin{aligned}
&f_{\gamma}(\gamma) = \frac{\eta_s^2 \varpi \mathcal{N}}{ \Gamma(\alpha_1) \Gamma(\beta_1) r_1 r_2 \Gamma(\alpha_2) \Gamma(\beta_2) \gamma} 
\sum_{k=0}^{N_k} \frac{\Gamma(1 + 2k)}{k! \, \Gamma(1 + k)}\\&\times \left( \frac{(1 - q_g^2) \varpi}{4 q_g} \right)^{2k} 
 \int_{0}^{\infty} x^{-1} \exp{(-x)}
{\rm H}_{2,3}^{3,0}\\&  \left[ \!\!\!\begin{array}{c}
\left(1 + \eta_s^2 \xi(\varphi), 1\right) \left(0, \frac{1}{r_1}\right) \\
\left(\eta_s^2 \xi(\varphi), 1\right) \left(\alpha_1, 1\right) \left(\beta_1, 1\right)
\end{array}\!\! \middle| \frac{\alpha_1 \beta_1}{A_{01} h_{p1}} \left( \frac{\gamma}{x \overline{\gamma}_{H1}} \right)^{\frac{1}{r_1}} \!\right] H_{2k+4,0}^{2k+1,2k+4}
\\&
\left[\!\!\!\begin{array}{c} {\left\{\left(\frac{(1 + q_g^2) \varpi}{2 q_g}+1,r_2\right)\right\}}_{2k+1}\\(\alpha_2, r_2) (\beta_2, r_2)(0,1){\left\{\!\!\left(\frac{(1 + q_g^2) \varpi}{2 q_g} ,r_2\right)\!\!\right\}}_{2k+1}\end{array}\middle| \begin{array}{cc}
\frac{\alpha_2 \beta_2}{A_{02} h_{p2}}\\\times \left( \frac{x C}{\overline{\gamma}_{H2}} \right)^{\frac{1}{r_2}}
\end{array} \!\!\right] \mathrm{d}x.
\end{aligned}$}
\end{equation}

The definition of $E(\gamma^s)$ is given as
\begin{equation}
\label{I2}
    E(\gamma^s) = \int_{0}^{\infty} \gamma^s f_{\gamma}(\gamma) \, \mathrm{d}\gamma.
\end{equation}

 Substituting \eqref{I1} into \eqref{I2} yields,
 \begin{equation}
\label{I3}
\scalebox{0.88}{$\begin{aligned}
& E(\gamma^s) = \frac{\eta_s^2 \varpi \mathcal{N}}{ \Gamma(\alpha_1) \Gamma(\beta_1) r_1 r_2 \Gamma(\alpha_2) \Gamma(\beta_2) }
\sum_{k=0}^{N_k} \frac{\Gamma(1 + 2k)}{k! \, \Gamma(1 + k)}\\& \times\left( \frac{(1 - q_g^2) \varpi}{4 q_g} \right)^{2k} 
 \int_{0}^{\infty} x^{-1} H_{0,1}^{1,0} \left[ (0,1) \middle| x \right] \int_{0}^{\infty}\gamma^{s-1}{\rm H}_{2,3}^{3,0}\\& \times
 \left[ \!\!\!\begin{array}{c}
\left(1 + \eta_s^2 \xi(\varphi), 1\right) \left(0, \frac{1}{r_1}\right) \\
\left(\eta_s^2 \xi(\varphi), 1\right) \left(\alpha_1, 1\right) \left(\beta_1, 1\right)
\end{array}\!\! \middle| \frac{\alpha_1 \beta_1}{A_{01} h_{p1}} \left( \frac{\gamma}{x \overline{\gamma}_{H1}} \right)^{\frac{1}{r_1}} \!\right] \mathrm{d} \gamma {\rm H}_{2k+4,0}^{2k+1,2k+4}
\\&\times\!
\left[\!\!\!\begin{array}{c} {\left\{\left(\frac{(1 + q_g^2) \varpi}{2 q_g}+1,r_2\right)\right\}}_{2k+1}\\(\alpha_2, r_2) (\beta_2, r_2)(0,1){\left\{\!\!\left(\frac{(1 + q_g^2) \varpi}{2 q_g} ,r_2\right)\!\!\right\}}_{2k+1}\end{array}\middle| \frac{\alpha_2 \beta_2}{A_{02} h_{p2}} \left( \frac{x C}{\overline{\gamma}_{H2}} \right)^{\frac{1}{r_2}} \!\!\right] \!\!\mathrm{d}x.
\end{aligned}$}
\end{equation}
Utilizing \cite[Eq.~(1.59)]{Htran2} and \cite[Eq.~(2.8.4)]{Htran1}, the moments of $\gamma$ can be computed according to (\ref{moment}).

\bibliographystyle{IEEEtran}
\bibliography{IEEEexample}

\end{document}